\documentclass[preprint]{imsart}
\RequirePackage{amsthm,amsmath,amsfonts,amssymb}
\RequirePackage[colorlinks,citecolor=blue,urlcolor=blue]{hyperref}
\usepackage{graphics, graphicx, xcolor}

\startlocaldefs
\usepackage[authoryear]{natbib}
\def\mC{\mathbb{C}}
\def\mR{\mathbb{R}}
\def\tr{\mbox{tr}}
\def\cov{\mbox{cov}}
\def\var{\mbox{var}}
\def\sign{\mbox{sign}}

\def\diag{\mbox{diag}}
\def\corr{\mbox{corr}}

\newcommand{\bSig}{\mbox{\boldmath $\Sigma$}}

\newcommand{\bI}{\mathbf I}
\newcommand{\B}{\mathbf B}
\newcommand{\K}{\mathbf{K}}
\newcommand{\bR}{\mathbf{R}}
\newcommand{\h}{\mathbf{h}}
\newcommand{\bx}{\mathbf{x}}
\newcommand{\bz}{\mathbf{z}}
\newcommand{\M}{\mathbf M}

\newcommand{\A}{\mathbf{A}}
\newcommand{\Z}{\mathbf{Z}}
\newcommand{\bS}{\mathbf{S}}
\newcommand{\bH}{\mathbf{H}}
\newcommand{\W}{\mathbf W}
\newcommand{\bu}{\mathbf u}
\newcommand{\bv}{\mathbf v}
\newcommand{\X}{\mathbf{X}}
\newcommand{\bT}{\mathbf{T}}
\newcommand{\Q}{\mathbf Q}
\newcommand{\E}{\mathbb E}
\newcommand{\ovec}{\mathbf{0}}

\newcommand{\di}{\mbox{diag}}
\newcommand{\trans}{^{ \mathrm{\scriptscriptstyle T}}}

\def\defby{\stackrel{\mbox{\textrm{\tiny def}}}{=}}


\newcommand\lb{\left(}
\newcommand\rb{\right)}

\def\indist{\,{\buildrel d \over= }\,}
\numberwithin{equation}{section}
\numberwithin{table}{section}

\newtheorem{theorem}{Theorem}[section]
\newtheorem{lemma}[theorem]{Lemma}

\newtheorem{proposition}[theorem]{Proposition}
\newtheorem{remark}[theorem]{Remark}


\endlocaldefs

\begin{document}
	
	\begin{frontmatter}
		\title{On eigenvalues of a high-dimensional Kendall's rank correlation matrix with dependence}
		\runtitle{Kendall correlation matrix under dependence}
		
		\begin{aug}
				\author[A]{\fnms{Zeng} \snm{Li}\ead[label=e0]{liz9@sustech.edu.cn}},
			\author[B]{\fnms{Cheng} \snm{Wang}\ead[label=e1]{chengwang@sjtu.edu.cn}}
				\footnote{All authors contributed equally to this work and Cheng (\printead{e1}) is the corresponding author.},
						\and
			\author[C]{\fnms{Qinwen} \snm{Wang}\ead[label=e2]{wqw@fudan.edu.cn}}

					\address[A]{Department of Statistics and Data Science,
				Southern University of Science and Technology.
				\printead{e0}}
			
			\address[B]{School of Mathematical Sciences, MOE-LSC, Shanghai Jiao Tong University. 
				\printead{e1}}  
			
			\address[C]{School of Data Science, Fudan University.
				\printead{e2}}
	\end{aug}
	
		\begin{abstract}
				This paper investigates limiting spectral distribution of a high-dimensional Kendall's rank correlation matrix. The underlying population is allowed to have general dependence structure. The result no longer follows the generalized Mar\u{c}enko-Pastur law, which is  brand new. It's the first result on rank correlation matrices with dependence. As applications, we study the Kendall's rank correlation matrix for multivariate normal distributions with a general covariance matrix.  From these results, we further gain insights on Kendall's rank correlation matrix and its connections with the sample covariance/correlation matrix. 
		\end{abstract}
		
		\begin{keyword}[class=MSC2020]
			\kwd[Primary ]{62H12}
			\kwd{62G35}
			\kwd[; secondary ]{60F05}
		\end{keyword}
		
		\begin{keyword}
		\kwd{Hoeffding Decomposition} \kwd{Kendall's Rank Correlation Matrix} \kwd{Limiting Spectral Distribution}  \kwd{Random Matrix Theory}
		\end{keyword}
		
	\end{frontmatter}

\section{Introduction}
Covariance and correlation matrices play a vital role in multivariate statistical analysis because they provide the most direct way to characterize the relation between different variables. Many statistical estimation or inference methods involve the covariance or correlation matrix, such as principal component analysis, multivariate analysis of variance, factor analysis, etc.  In high-dimensional data analysis, studying the eigenvalues and eigenvectors of such covariance/correlation matrices are fundamental problems.

In random matrix theory,  sample covariance matrix has been thoroughly studied in the past decades.   For an $n \times n$ Hermitian matrix $\bH_n$, the \emph{Empirical Spectral Distribution} (ESD) of $\bH_n$ is defined as
\begin{align*}
	F^{\bH_n}(x)=\frac{1}{n}\sum_{i=1}^n I(\lambda_i\leq x),
\end{align*}
where $\lambda_1,\cdots,\lambda_n$ are eigenvalues of $\bH_n$ and $I(\cdot)$ is the indicator function. If $F^{\bH_n}$ converges to a deterministic distribution function $F$,  then $F(x)$ is called the \emph{Limiting Spectral Distribution} (LSD) of $\bH_n$.  \cite{marvcenko1967distribution} first derived the LSD of the sample covariance matrix. Further, \cite{bai2004clt} studied the \emph{Central Limit Theorem} (CLT) for its \emph{Linear Spectral Statistics} (LSSs) defined as
\begin{align*}
	\frac{1}{n} \sum_{i=1}^n f(\lambda_i)=\int f(x)  dF^{\bH_n}(x),
\end{align*}
where $f(\cdot)$ is a function on $\mR^+$. 
 As we know, many important statistics in multivariate analysis can be expressed through ESD, for example,
\begin{align*}
	\frac{1}{n}\tr(\bH_n)=\frac{1}{n} \sum_{i=1}^n \lambda_i=\int x dF^{\bH_n}(x)
\end{align*}
 and
 \begin{align*}
 		\frac{1}{n} \log|\bH_n|=\frac{1}{n}\sum_{i=1}^n \log{(\lambda_i)}=\int\log{(x)}  dF^{\bH_n}(x).
 \end{align*}
Generally, LSD  describes the first-order limits of these LSSs and CLT then  characterizes their second-order asymptotics. The two are the analogs of \emph{Law of Large Numbers} and \emph{Central Limit Theorem} in classical probability theory, respectively.  As applications,  CLT for LSSs provides an important tool for many hypothesis testing problems in multivariate analysis with diverging data dimension, e.g., \cite{bai2009corrections} derived the distribution of the likelihood ratio test for high-dimensional data. Both LSD and CLT for LSSs study the global law of the empirical eigenvalues. Another fundamental problem is the local law \citep{knowles2017anisotropic}, e.g., the asymptotic behaviors of the smallest and largest eigenvalues. The well-known Bai-Yin law  \citep{bai1993limit} derived the limits of the extreme eigenvalues. 
 \cite{johnstone2001distribution} further established the Tracy-Widom law for the largest eigenvalues, which plays a fundamental role in principal component analysis. 
 For more comprehensive overview on this topic, one is referred to \cite{bai2010spectral}.

In practice, data normalization is a standard procedure and after that, we are actually dealing with the sample correlation matrix \citep{el2009concentration}.  Parallel to the study of the sample covariance matrix,   \cite{jiang2004limiting} first obtained the LSD of Pearson-type sample correlation matrix and \cite{gao2017high} developed the CLT for its LSSs.  \cite{bao2012tracy} established the Tracy-Widom law for its extreme eigenvalues and \cite{pillai2012edge} extended the result to general cases. However, due to the complex structure of the sample correlation matrix,  most results only consider the case that sample data has independent components so that its population covariance matrix is diagonal  and then the correlation matrix is  identity.  From the perspective of applications, the independence assumption is however too strong so that such results have very limited applicability \citep{el2009concentration}. On the other hand, for general dependent  or correlated data, little work  \citep[e.g.,][]{el2009concentration, morales2021asymptotics} has been done on the sample correlation matrix. As far as the CLT for LSSs, the existing works include  \cite{mestre2017correlation} that considered  the case 
	for Gaussian distributions and \cite{zheng2019test} studied the trace moments.


For the sample covariance/correlation matrix,  due to the congenital sensitivity of Pearson-type correlation, finite fourth order or even higher order moments of the data distribution are usually required to guarantee the convergence of the limiting distributions. However, most of the results applicable to light-tailed distributions cannot be directly extended to heavy-tailed cases, e.g., \cite{heiny2020limiting} explored the spectral behavior of Pearson-type correlations for heavy-tail distributions where the story becomes completely different.


As a remedy for dealing with heavy-tailed data samples, some non-parametric correlation matrix, such as Kendall's $\tau$ and Spearman's $\rho$, have received considerable attention in recent years. 
Kendall's $\tau$ and Spearman's $\rho$ are rank-based  and thus there's no need to impose any moment restrictions on the underlying distribution. What is more,  classical theory on non-parametric statistics shows that only  partial information will be lost while robustness can be retained if we only use the ranks of the data. In random matrix theory,  \cite{bai2008large} first derived the LSD of Spearman's rank correlation matrix, which turns out to be the same as the standard Mar\u{c}enko-Pastur law. For the Kendall's $\tau$, \cite{bandeira2017marvcenko} proved that its LSD is an affine transformation of the standard Mar\u{c}enko-Pastur law.
For the CLT for LSSs,  \cite{bao2015Spearman} considered Spearman's rank correlation matrix and  \cite{ZWL2021Kendall} studied Kendall's rank correlation matrix. The Tracy-Widom law for the extreme eigenvalues of the two matrices can be found in \cite{bao2019tracy_sp} and \cite{bao2019tracy}, respectively.  However, all these asymptotic results are for data sample with independent components, i.e., all components are independent. To the best of our knowledge,  there are no available results on such rank correlation matrices when the underlying distribution has general dependent  structure.  We summarize the developments of the sample covariance matrix,  sample correlation matrix,  Kendall's $\tau$ and  Spearman's $\rho$ in Table \ref{table1}. 

\begin{table}[ht!]
	\centering
	\caption{Developments of sample covariance/correlation matrices in random matrix theory}
	\label{table1}
		\resizebox{12cm}{!}{%
	\begin{tabular}{|c|c|c|c|c|}
\hline
& \mbox{Sample~covariance}&	\mbox{Sample~correlation}&\mbox{Kendall's $\tau$}&	\mbox{ Spearman's $\rho$}\\
\hline
& \multicolumn{4}{c|}{\mbox{Independent case ($\bSig=\bI$)}}\\
\hline
\mbox{LSD}&\cite{marvcenko1967distribution}&\cite{jiang2004limiting}& \cite{bandeira2017marvcenko}&\cite{bai2008large}\\
\hline
\mbox{CLT for LSSs}  &\cite{bai2004clt}&\cite{gao2017high}&\cite{ZWL2021Kendall}& \cite{bao2015Spearman}\\
\hline
\mbox{Tracy-Widom}&\cite{johnstone2001distribution}&\cite{bao2012tracy}&\cite{bao2019tracy} &\cite{bao2019tracy_sp}\\
\hline
& \multicolumn{4}{c|}{\mbox{Dependent case (general $\bSig$)}}\\
\hline
\mbox{LSD}&\cite{marvcenko1967distribution} &\cite{el2009concentration}&&\\
\hline
\mbox{CLT for LSSs}&\cite{bai2004clt} &\cite{mestre2017correlation}&&\\
\hline
\mbox{Tracy-Widom}&\cite{feral2009largest} &&&\\
\hline
\cline{2-5}
\end{tabular}}
\end{table}

As can be seen from Table \ref{table1}, the asymptotic behaviors of the eigenvalues of the  rank  correlation matrices under general dependent structure is still unclear. 
In this paper, we take the first step to fill this gap and focus on Kendall's rank correlation matrix with high-dimensional correlated data. Specifically, for data sample $\bx_1,\cdots,\bx_n \in \mR^p$, we define the sign vector
\begin{align*}
	\A_{ij}=\sign(\bx_i-\bx_j)=\left(\sign(x_{i1}-x_{j1}),\cdots,\sign(x_{ip}-x_{jp})\right)\trans
\end{align*}
where $\sign(\cdot)$ denotes the sign function and the sample Kendall's rank correlation matrix  \citep{kendall1938new} 
\begin{align} \label{kn1}
	\K_n=\frac{2}{n(n-1)} \sum_{1\leq i<j\leq n} \A_{ij}\A\trans_{ij}.
\end{align}
Our goal is to study the spectral properties of $\K_n$ when $\bx_i's$ have a general dependent structure. This is a problem of its own significant interest in random matrix theory. To study Kendall's rank correlation matrix with dependence, LSD is the cornerstone for further derivations of CLT for LSSs \citep{bai2004clt} and local laws including the asymptotic distribution of extreme eigenvalues \citep{knowles2017anisotropic}.  It's also a key step to solve many high-dimensional statistical problems with heavy-tailed observations. Taking high-dimensional independent test as an example, many test statistics based on covariance/correlation matrices have been proposed to test complete independence among the components of $\bx_i's$; see \cite{schott2005testing}, \cite{bao2015Spearman}, \cite{gao2017high}, \cite{leung2018testing}, \cite{bao2019tracy}, \cite{ZWL2021Kendall} etc.  In particular, \cite{leung2018testing} and \cite{ZWL2021Kendall} considered test statistics that based on linear functions of the eigenvalues of Kendall's $\tau$, e.g., $\tr(\K_n^2)$ and $\log|\K_n|$. However, the test power is still unclear since the limiting properties of  Kendall's $\tau$  (also Spearman's $\rho$) under general dependent alternatives remain largely unknown. 

To answer such  questions, in this paper, we take the first step to derive the limiting spectral distribution of $\K_n$ under the asymptotic regime where $p,n\rightarrow\infty$ and $p/n\rightarrow c \in (0,\infty)$.  One major challenge is the nonlinear dependent structure among the sign-based summands $\A_{ij}$ of $\K_n$. Hence, we apply the Hoeffding decomposition to $\A_{ij}$ to locate the leading terms. In this way, we obtain the equation which the Stieltjes transform of the limiting spectral distribution of $\K_n$ satisfies. It's a brand new distribution which relies heavily on both  the covariance and conditional covariance structure of $\A_{ij}$. As illustration, we study the normal distribution where the Kendall's rank correlation has a specific relation with the Pearson's correlation and then we derive explicit LSDs for some cases with common dependent structure. Simulation experiments also lend full support to the accuracy of our theoretical results.

The rest of the paper is organized as follows. Section~\ref{sec:main results} introduces some preliminary knowledge on Kendall's rank correlation matrix
 and Hoeffding decomposition.  Section \ref{main:lsd} contains our main results on the LSD of Kendall's rank correlation matrix for correlated data. Section \ref{main:guass} considers the Gaussian distributions and Section \ref{sec:simu} collects all the numerical experiments.  Proofs of the main results are given in the Appendix.

\section{Background on Kendall's rank correlation matrix}\label{sec:main results}
\subsection{Kendall's rank correlation matrix}
From the definition of  Kendall's rank correlation matrix  \eqref{kn1},  we can write
\begin{align*}
	\K_n=\frac{2}{n(n-1)} \sum_{1\leq i<j\leq n} \sign\left(\frac{\bx_i-\bx_j}{\sqrt 2}\right)\sign\left(\frac{\bx_i-\bx_j}{\sqrt 2}\right)\trans,
\end{align*}
which looks  similar with the  sample covariance matrix. To be specific, the sample  covariance matrix of the data sample $\bx_1,\cdots, \bx_n \in \mR^p$ can be written as  an U-statistic of order two, i.e.,
\begin{align}\label{scm}
	\bS_n=\frac{1}{n-1}\sum_{i=1}^n(\bx_i-\bar{\bx}) (\bx_i-\bar{\bx})\trans=\frac{2}{n(n-1)} \sum_{1\leq i<j\leq n} \left(\frac{\bx_i-\bx_j}{\sqrt{2}}\right) \left(\frac{\bx_i-\bx_j}{\sqrt{2}} \right)\trans.
\end{align}
Despite this similarity in their forms, the inner structure of the two matrices does not follow the same pattern. The \emph{sign} function introduces non-linear correlation into the matrix $\K_n$ and it is nontrivial to analyze such correlation even for binary random variables. For example, \cite{esscher1924method} spent a lot of efforts to derive the variance of Kendall's rank correlation for bi-normal distributions.  As can be seen from  \cite{childs1967reduction}, it is already quite complicated to calculate the integral of  \emph{sign} function over fourth order even for normal distribution. Thus, for high-dimensional Kendall's rank correlation matrix, it is very challenging to study its asymptotic properties.

One appealing property of Kendall's rank correlation is that it is monotonically invariant \citep{weihs2018symmetric}.
\begin{proposition}[Monotonic Invariance] \label{mon-inv}
	For any strictly increasing monotonic functions  $f_j(\cdot), j=1,\ldots,p$,  $\K_n$ is invariant for monotonic component transformation 
	\begin{align*}
		\bx_i=(x_{i1},\cdots,x_{ip})\trans \mapsto \big( f_1(x_{i1}),\cdots,f_p(x_{ip}) \big)\trans.
	\end{align*}
\end{proposition}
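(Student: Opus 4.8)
The plan is to exploit the fact that $\K_n$ depends on the data only through the sign vectors $\A_{ij}$, combined with the elementary observation that a strictly increasing function preserves the sign of a difference. First I would note from \eqref{kn1} that $\K_n$ is determined entirely by the family $\{\A_{ij}\}_{1\leq i<j\leq n}$, and that each $\A_{ij}$ is built coordinatewise from the scalar quantities $\sign(x_{ik}-x_{jk})$ for $k=1,\ldots,p$. Consequently it suffices to prove that every such scalar sign is left unchanged when $x_{ik}$ is replaced by $f_k(x_{ik})$; the invariance of the full matrix then follows termwise in the sum defining $\K_n$.

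The key step is a one-line sign-preservation lemma: for any strictly increasing $f_k$ and any reals $a,b$ one has $\sign(f_k(a)-f_k(b))=\sign(a-b)$. I would verify this by splitting into the three mutually exclusive cases $a<b$, $a=b$, $a>b$; strict monotonicity forces $f_k(a)<f_k(b)$, $f_k(a)=f_k(b)$, $f_k(a)>f_k(b)$ respectively, matching the sign in each case. Applying this coordinatewise with $a=x_{ik}$ and $b=x_{jk}$ shows that the transformed sign vector $\tilde{\A}_{ij}$ equals $\A_{ij}$ for every pair $i<j$, so the transformed matrix coincides with $\K_n$ identically (not merely in distribution). Note that the diagonal contributions $(\sign(x_{ik}-x_{jk}))^2$ are handled by the same case analysis.

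There is no substantial obstacle here; the argument is purely order-theoretic and requires no probabilistic input. The only point demanding care is the insistence on \emph{strict} monotonicity: if $f_k$ were merely nondecreasing, two distinct values could be collapsed to a common value, turning a nonzero sign into $0$ and destroying the invariance. I would therefore flag strictness explicitly as the hypothesis that makes the sign-preservation lemma exact.
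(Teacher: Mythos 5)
Your proof is correct and takes essentially the same route as the paper, which justifies the proposition with the single observation that Kendall's correlation is rank-based and monotone maps preserve order; your coordinatewise sign-preservation lemma $\sign(f_k(a)-f_k(b))=\sign(a-b)$ is exactly that observation made explicit at the level of the $\A_{ij}$ in \eqref{kn1}. Your additional remark on why \emph{strict} monotonicity is needed is accurate and a worthwhile clarification, but it does not constitute a different argument.
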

The reason is that Kendall's rank correlation is rank-based and the monotonic transformation does not change the order statistics. One special case is that for any linear transformation of the data
\begin{align*}
	\bx_i=(x_{i1},\cdots,x_{ip})\trans \mapsto (\mu_1+\sigma_1 x_{i1}, \cdots, \mu_p+\sigma_p x_{i1} )\trans,
\end{align*}
their corresponding Kendall's rank correlations  remain unchanged. That is, Kendall's $\tau$ is a correlation matrix which is invariant to the location and scale. More importantly, for any distribution,  Kendall's rank  correlation always exists and this fact makes it an important tool to characterize data with heavy tails. 

In previous works on Kendall's rank correlation matrix for high-dimensional data (e.g., \citealt{bandeira2017marvcenko}, \citealt{leung2018testing}, \citealt{bao2019tracy}, \citealt{ZWL2021Kendall}), they assumed that all the components were independent with absolutely continuous density.  By the monotonic invariance, we can always transform each component into a standard normal distribution and all components are still independent. Thus, it can be formulated as that $\bx_1, \ldots, \bx_n$ are independent and identically distributed (i.i.d.) from a standard multivariate normal distribution $N(0,\bI_p)$, from which we can see that these results are very limited.  In this work, we consider more general cases where the data components are allowed to have dependence. 

\subsection{Hoeffding decomposition}
In this part, to deal with the nonlinear dependent structure of high-dimensional Kendall's rank correlation matrix, we apply Hoeffding decomposition to find out the leading terms. Specifically, denote $\A_i$ as the conditional expectation of $\A_{ij}$ given $\bx_i$, 
\begin{align}\label{ai}
	\A_i  \defby \E\{\sign(\bx_i-\bx)\mid \bx_i\},
\end{align}
the Hoeffding decomposition for $\A_{ij}$ can be written as,
\begin{align}\label{hod}
	\A_{ij}=\A_i-\A_j+\epsilon_{ij},
\end{align}
where 
\begin{align*}
	\epsilon_{ij}=\sign(\bx_i-\bx_j)-\E\{\sign(\bx_i-\bx_j)\mid \bx_i\}+\E\{\sign(\bx_i-\bx_j)\mid \bx_j\}.
\end{align*}

Throughout this paper, we assume that $\bx_1, \ldots, \bx_n$ are i.i.d. from a population with absolutely continuous density. Then, we have
\begin{align*}
	\E(\A_{ij})=\E(\A_i)=\E(\epsilon_{ij})=\ovec,
\end{align*}
and the covariance matrices of $\A_{ij},~\A_i$ and $\epsilon_{ij}$ exist. Specially, we denote 
\begin{align}
	\bSig_1 \defby	\cov(\A_{ij}),\quad \bSig_2 \defby \cov(\A_i),\quad \cov(\epsilon_{ij})=\bSig_1-2\bSig_2 \defby \bSig_3.
\end{align}

\subsection{Preliminary results}

With the Hoeffding decomposition of $\A_{ij}$  described in \eqref{hod},   the Kendall's rank correlation matrix $\K_n$ can  be decomposed accordingly,
\begin{align}\label{dek}
	\K_n
	&=\frac{2}{n(n-1)} \sum_{1\leq i<j\leq n}(\A_i-\A_j+\epsilon_{ij})(\A_i-\A_j+\epsilon_{ij})\trans \nonumber \\ \nonumber
		&= \frac{2}{n(n-1)} \sum_{1\leq i<j\leq n}(\A_i-\A_j)(\A_i-\A_j)\trans+\frac{2}{n(n-1)} \sum_{1\leq i<j\leq n}(\A_i-\A_j)\epsilon_{ij}\trans\\ \nonumber
		&\quad+\frac{2}{n(n-1)} \sum_{1\leq i<j\leq n}\epsilon_{ij}(\A_i-\A_j) \trans+\frac{2}{n(n-1)} \sum_{1\leq i<j\leq n}\epsilon_{ij} \epsilon_{ij}\trans\\ 
	&=\M_1+\M_2+\M_2\trans+\M_3,
\end{align}
where 
\begin{align*}
&\M_1\defby \frac{2}{n(n-1)} \sum_{1\leq i<j\leq n}(\A_i-\A_j)(\A_i-\A_j)\trans=\frac{2}{n-1}\sum_{i=1}^n(\A_i-\bar{\A}) (\A_i-\bar{\A}) \trans,	\nonumber
\\
&\M_2\defby \frac{2}{n(n-1)} \sum_{1\leq i<j\leq n}(\A_i-\A_j)\epsilon_{ij}\trans,\quad \M_3\defby \frac{2}{n(n-1)} \sum_{1\leq i<j\leq n}\epsilon_{ij} \epsilon_{ij}\trans, 
\end{align*}
and $\bar{\A}=\frac 1n\sum_{i=1}^n \A_i$. In particular, $\M_1$ is the sample covariance matrix formed by  i.i.d. random vectors $\A_1,\ldots,\A_n$, see the illustration in \eqref{scm}.

Our first result is to show that the error terms $\M_2$ and $\M_3-\E(\M_3)$ can be controlled so that the dominant contribution   in terms of the LSD of $\K_n$  is from $\M_1$.
As a result, we define the following matrix 
\begin{align}\label{wn}
	\W_n \defby \M_1+\bSig_3= \frac{2}{n-1}\sum_{i=1}^n(\A_i-\bar{\A}) (\A_i-\bar{\A}) \trans+\bSig_3.
\end{align}
Throughout the paper, we use $\|\cdot\|$ and $\|\cdot \|_2$ to denote the common spectral norm and Frobenius norm of a matrix, respectively.

\begin{proposition}\label{thm1}
Assume $\|\bSig_1\| \leq C$ for some universal constant $C$ and 
\begin{align*}
	\frac{1}{p^2} \var\left(\A_{12} \trans \A_{13} \right) \to 0,
\end{align*}	 
then we have
\begin{align*}
	L(F^{\K_n}, F^{\W_n}) \to 0,~\mbox{in~probability}
	\end{align*}
	where $L(\cdot,\cdot)$ is the Levy distance between two distributions.
\end{proposition}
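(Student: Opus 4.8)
The plan is to control the Levy distance through the Frobenius distance of the two matrices. I will use the standard bound (e.g.\ Theorem~A.45 of \citealt{bai2010spectral}) that for symmetric $p\times p$ matrices the cube of the Levy distance of their ESDs is dominated by the normalized squared Frobenius norm of their difference, so that
\begin{align*}
	L^3\big(F^{\K_n},F^{\W_n}\big)\le \frac1p\,\tr\big[(\K_n-\W_n)^2\big]=\frac1p\|\K_n-\W_n\|_2^2 .
\end{align*}
Since $\E(\M_3)=\frac{2}{n(n-1)}\sum_{1\le i<j\le n}\E(\epsilon_{ij}\epsilon_{ij}\trans)=\bSig_3$, the centering in \eqref{wn} is exact, and by \eqref{dek} we have $\K_n-\W_n=\M_2+\M_2\trans+(\M_3-\E\M_3)$. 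Using $\|\M_2\trans\|_2=\|\M_2\|_2$ and an elementary inequality, it therefore suffices to show $\tfrac1p\E\|\M_2\|_2^2\to0$ and $\tfrac1p\E\|\M_3-\bSig_3\|_2^2\to0$; Markov's inequality then gives $\tfrac1p\|\K_n-\W_n\|_2^2\to0$ in probability, and the displayed bound yields $L(F^{\K_n},F^{\W_n})\to0$ in probability.

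Next I would expand each expected squared Frobenius norm into a double sum over index pairs and classify the summands by the overlap $|\{i,j\}\cap\{k,l\}|\in\{0,1,2\}$. For $\M_3$, write $\tfrac1p\E\|\M_3-\bSig_3\|_2^2=\tfrac1p\big(\E\tr(\M_3^2)-\tr(\bSig_3^2)\big)$ with $\E\tr(\M_3^2)=\tfrac{4}{n^2(n-1)^2}\sum_{i<j}\sum_{k<l}\E[(\epsilon_{ij}\trans\epsilon_{kl})^2]$. When the overlap is $0$, $\epsilon_{ij}$ and $\epsilon_{kl}$ depend on disjoint samples, so $\E[(\epsilon_{ij}\trans\epsilon_{kl})^2]=\tr(\bSig_3^2)$; these $\binom n2\binom{n-2}2$ terms reproduce $(1-o(1))\tr(\bSig_3^2)$ and cancel the centering up to $o(1)\cdot\tr(\bSig_3^2)$, which is $o(p)$ because $\bSig_3=\bSig_1-2\bSig_2\preceq\bSig_1$ gives $\|\bSig_3\|\le\|\bSig_1\|\le C$ and hence $\tr(\bSig_3^2)\le\|\bSig_3\|\tr(\bSig_3)=O(p)$. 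The diagonal terms (overlap $2$) number $O(n^2)$ with weight $O(n^{-4})$ and contribute $O(n^{-2})\E\|\epsilon_{12}\|^4$; since each coordinate of $\epsilon_{ij}$ lies in $[-3,3]$ we have $\|\epsilon_{12}\|^2\le 9p$, so this is $O(p/n^2)\to0$ after dividing by $p$. The analysis of $\M_2$ is parallel but easier in the zero-overlap regime: because $\E(\epsilon_{ij}\mid\bx_i)=\E(\epsilon_{ij}\mid\bx_j)=\ovec$, each disjoint term of $\E\|\M_2\|_2^2$ factorizes into a product of expectations each of which vanishes, and the diagonal terms are again $O(p/n^2)$ by the same boundedness together with $\|\A_i\|^2\le p$.

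The crux is the one-overlap terms, of which there are $\Theta(n^3)$ each carrying weight $\Theta(n^{-4})$, so that after dividing by $p$ and using $p\asymp n$ they are of order $p^{-2}$ times a second moment; this is exactly where the hypothesis $p^{-2}\var(\A_{12}\trans\A_{13})\to0$ enters. For $\M_3$ the relevant quantity is $\tfrac1{pn}\E[(\epsilon_{12}\trans\epsilon_{13})^2]$, and since $\E(\epsilon_{12}\mid\bx_1)=\ovec$ forces $\E(\epsilon_{12}\trans\epsilon_{13})=0$, this equals $\tfrac1{pn}\var(\epsilon_{12}\trans\epsilon_{13})\asymp p^{-2}\var(\epsilon_{12}\trans\epsilon_{13})$. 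For $\M_2$ the surviving one-overlap term is a mixed moment such as $\E\big[(\epsilon_{12}\trans\epsilon_{13})\,((\A_1-\A_3)\trans(\A_1-\A_2))\big]$, which Cauchy--Schwarz bounds by $\sqrt{\var(\epsilon_{12}\trans\epsilon_{13})}\cdot O(p)$ using $\|\A_i\|^2\le p$, so after normalization it is $O\big(\sqrt{p^{-2}\var(\epsilon_{12}\trans\epsilon_{13})}\big)$.

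Everything thus reduces to $p^{-2}\var(\epsilon_{12}\trans\epsilon_{13})\to0$, and I expect this reduction to the hypothesis to be the main obstacle. The route is to substitute $\A_{1j}=\A_1-\A_j+\epsilon_{1j}$ into $\A_{12}\trans\A_{13}$ and perform a conditional-variance decomposition with respect to the shared sample $\bx_1$. Here the structure is favorable: $\E(\A_{12}\trans\A_{13}\mid\bx_1)=\A_1\trans\A_1=\|\A_1\|^2$ (by conditional independence of $\bx_2,\bx_3$), so $\var(\|\A_1\|^2)\le\var(\A_{12}\trans\A_{13})$, while $\E(\epsilon_{12}\trans\epsilon_{13}\mid\bx_1)=0$ ties $\var(\epsilon_{12}\trans\epsilon_{13})$ to the within-$\bx_1$ conditional-variance part of $\var(\A_{12}\trans\A_{13})$. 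Expanding the remaining bilinear pieces and noting that all terms involving only the i.i.d.\ projections $\A_i$ are bounded at order $O(p^2)$ by $\|\bSig_1\|\le C$ (which controls $\tr\bSig_1$ and $\tr\bSig_1^2$), one bounds $\var(\epsilon_{12}\trans\epsilon_{13})=O\big(\var(\A_{12}\trans\A_{13})\big)+o(p^2)$. Combining the three overlap regimes then yields $\tfrac1p\E\|\M_2\|_2^2\to0$ and $\tfrac1p\E\|\M_3-\bSig_3\|_2^2\to0$, which completes the argument.
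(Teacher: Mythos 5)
Your proposal is correct and takes essentially the same approach as the paper: the Levy--Frobenius bound $L^3(F^{\K_n},F^{\W_n})\le p^{-1}\|\K_n-\W_n\|_2^2$, the reduction to $p^{-1}\E\|\M_2\|_2^2\to 0$ and $p^{-1}\E\|\M_3-\bSig_3\|_2^2\to 0$, and a U-statistic overlap analysis in which the hypothesis enters only through the one-overlap terms. The sole difference is bookkeeping: you funnel both error terms through $\var(\epsilon_{12}\trans\epsilon_{13})$ and reduce it to $\var(\A_{12}\trans\A_{13})+O(p)$ by law-of-total-variance arguments (which does go through), whereas the paper bounds the $\M_2$ cross terms by $O(p^{3/2})$ using only $\|\bSig_1\|\le C$ and expands $\E[(\epsilon_{12}\trans\epsilon_{13})^2]$ directly; note also that your ``$O(p^2)$'' for the pure-$\A_i$ variance terms should read $O(p)$, which is what $\|\bSig_1\|\le C$ actually yields and what your final $o(p^2)$ bound requires.
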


\begin{remark}
The assumptions on $\|\bSig_1\|$ and $\var\left(\A_{12} \trans \A_{13} \right)$ are to avoid too strong dependence among the components of the data. In random matrix theory, it is a regular condition to assume that the norm of the population covariance matrix is uniformly bounded, e.g.,  Condition 3 in \cite{bai2008large}. Here, this condition is also required for  bounding the difference between $\mathbf{K}_n$ and  $\mathbf{W}_n$. To show that such condition  $\|\bSig_1\| \leq C$ is necessary, we conduct a toy example in the following. Specifically,  we generate $n$ data sample $\bx_1,\ldots,\bx_n,i.i.d \sim N(0,\bSig)$ where $\bSig$ is a matrix with $\Sigma_{ii}=1$ and $\Sigma_{ij}=\rho$. For this case, 
	\begin{align*}
\|\bSig_1\| =1+\frac{2}{\pi}(p-1) \arcsin(\rho),
	\end{align*}
which is unbounded for any $\rho>0$. Figure \ref{fig1} presents the distance $\|\K_n-\W_n\|_2^2/p$ versus the increasing $\rho$, and from which we can see that $\|\bSig_1\| \leq C$ is necessary for bounding the difference between $F^{\K_n}$ and $F^{\W_n}$.
\begin{figure}[!h]
		\includegraphics[width=.7\linewidth]{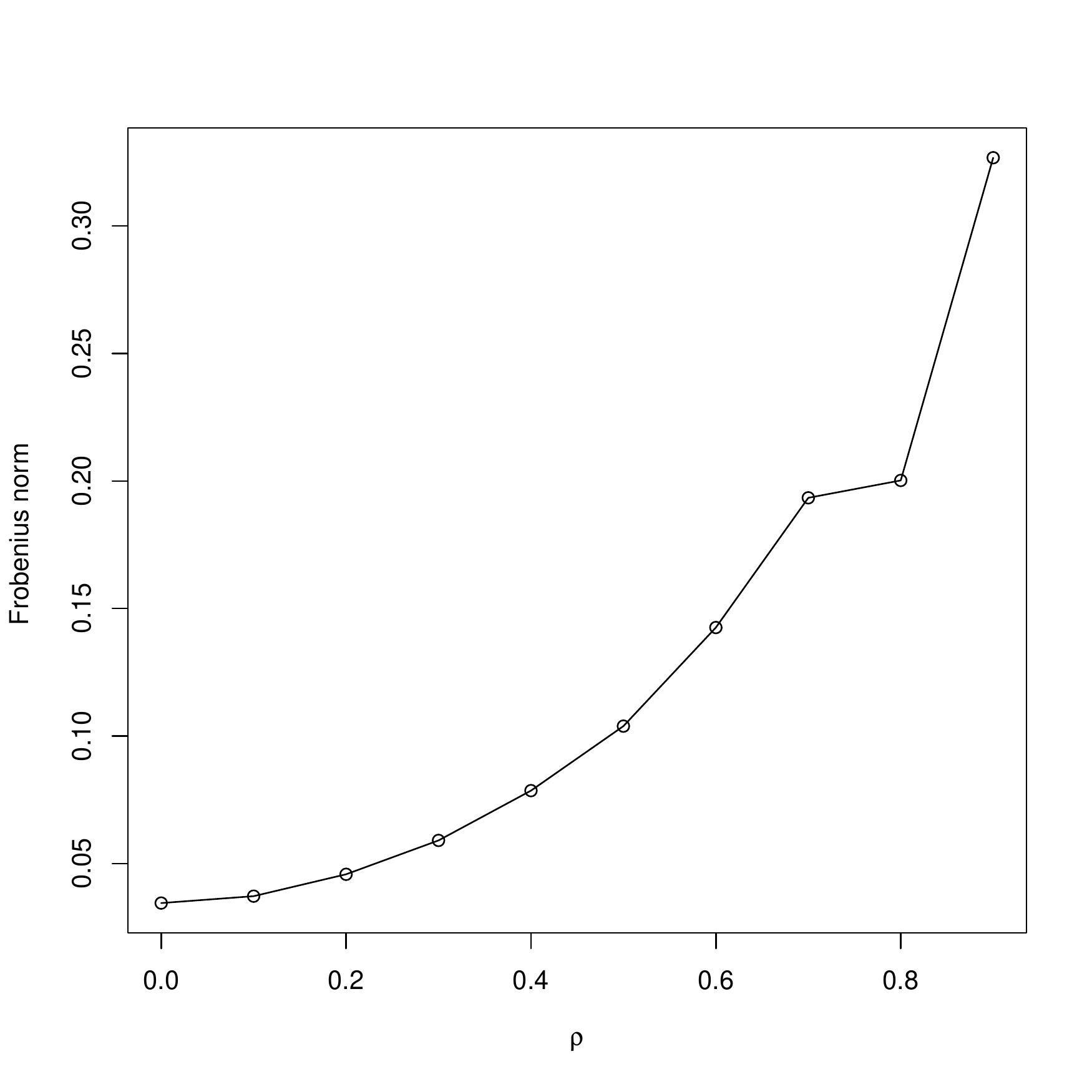}
	\caption{Plots of the scaled squared Frobenius norm of $\K_n-\W_n$  versus $\rho \in [0,0.9]$. Data sample $\bx_1,\cdots,\bx_n$ are generated from a multivariate normal distribution $N_p(0,\bSig)$ with $\Sigma_{ii}=1$ and $\Sigma_{ij}=\rho$. Here $(n,p)=(100, 200)$ and the results are based on 100 replications.}
	\label{fig1}
\end{figure}
\end{remark}

Noting that $\A_1,\ldots, \A_n$ are i.i.d. random vectors with covariance matrix $\cov(\A_i)=\bSig_2$ and 
	\begin{align*}
		\E \K_n=\bSig_1=2 \bSig_2+\bSig_3.
	\end{align*}
Proposition \ref{thm1} shows that part of Kendall's rank correlation matrix has similar fluctuations as the usual sample covariance matrix with population covariance matrix $2\bSig_2$ and the other part is concentrated on the  deterministic matrix $\bSig_3$. This phenomenon is an analogy of Hoeffding decomposition for the classical U-statistics. By implementing the Hoeffding decomposition for the random vector $\A_{ij}$, we then transfer the study of the  LSD of  $\K_n$ to the study of the LSD of $\W_n$.

\section{Limiting spectral distribution of  $\K_n$} \label{main:lsd}
In this section, we present the LSD of the  Kendall's rank correlation matrix $\K_n$. We first introduce the concept of 
 \emph{Stieltjes transform}, which is an important tool  in random matrix theory.
Letting $\mu$ be a finite measure on $\mR$, its \emph{Stieltjes transform} $s_{\mu}(z)$ is defined as 
\begin{align*}
	s_{\mu}(z)=\int\frac{1}{x-z} \mu(dx),~z\in \mC^{+},
\end{align*}
where $\mC^{+}$ denotes the upper complex plane. We can also obtain $\mu$ from $s_{\mu}(z)$ by the inversion formula. For any two continuity points $a<b$ of $\mu$, we have
\begin{align}\label{eq:inv}
	\mu([a,b])=\lim_{\nu \to 0^+}\frac{1}{\pi}\int_a^b \Im s_{\mu}(x+i\nu)dx,
\end{align}
where $\Im$ is the imaginary part of a complex number and $i$ is the imaginary unit. 

For Kendall's rank correlation matrix, \cite{bandeira2017marvcenko} derived the LSD when the observations $\bx_1, \ldots, \bx_n$ are i.i.d. random vectors and the components are also independent with absolutely continuous density. They show that as $n\rightarrow\infty$, $p/n\rightarrow c\in(0,\infty)$, the ESD of such Kendall's rank converges in probability to an affine transformation of  the standard Mar$\check{\mbox{c}}$enko-Pastur law with parameter $c$, which  has an explicit form whose density function $p_c(x)$ is given by
\begin{align*}
p_c(x)=
\frac{9}{4\pi c(3x-1)}\sqrt{(c_{+}-x)(x-c_{-})}+(1-1/c)\delta_{\frac{1}{3}}I(c>1),~c_{-} \leq x\leq c_{+},
\end{align*}
where $c_{-}=\frac{1}{3}+\frac{2}{3}(1-\sqrt{c})^2$ and $c_{+}=\frac{1}{3}+\frac{2}{3}(1+\sqrt{c})^2$. The corresponding Stieltjes transform $s(z)\in \mC^{+}$ is the unique solution to the following equation
\begin{align}\label{eq:INDszeq}
	\frac{2}{3}c\left(z-\frac{1}{3}\right)s^2(z)+\left(z-1+\frac{2}{3}c\right)s(z)+1=0.
\end{align}

To illustrate the challenges of Kendall's rank correlation matrix in random matrix theory, we consider the ranking of the data
\begin{align*}
\begin{pmatrix}
\bx_1\trans\\
\vdots\\
\bx_n \trans
\end{pmatrix}=\underbrace{\begin{pmatrix}
x_{11} &\cdots&x_{1p}\\
 \vdots &\vdots&\vdots\\
  x_{n1} &\cdots&x_{np}\\
	\end{pmatrix}}_{\text{raw data matrix}} \Longrightarrow \underbrace{\ \begin{pmatrix}
r_{11} &\cdots&r_{1p}\\
	\vdots &\vdots&\vdots\\
	r_{n1} &\cdots&r_{np}\\
\end{pmatrix}}_{\text{ranking matrix}},
\end{align*}
where each column $(r_{1j},\ldots,r_{nj})$ are the rank of the raw data $(x_{1j},\ldots,x_{nj})$.
 For i.i.d. sample $\bx_1,\cdots, \bx_n\in \mR^p$,  each column of the ranking matrix follows the uniform distribution on the set of all $n!$ permutations of $\{1,2,\cdots,n\}$. While the rows of the raw data matrix are independent, the ranking matrix dose not have independent rows anymore.   For the special case where the columns of the raw data are also independent (e.g., \citealt{bandeira2017marvcenko}, \citealt{leung2018testing}, \citealt{bao2019tracy}, \citealt{ZWL2021Kendall}), the columns of the ranking matrix will be independent. Then, the raw data is actually with i.i.d entries which has very limited applications.  If the columns of the raw data are dependent, e.g., there is a covariance structure among components, both the rows and the columns of the ranking matrix are dependent. From the perspective of random matrix theory, analyzing such matrices is very challenging.
 
We first provide a general result as follows and then study the case for Gaussian distribution in the next section.
\begin{theorem}\label{thm:thlsd}
	For i.i.d. continuous data sample $\bx_1,\cdots, \bx_n\in \mR^p$, assume that
	\begin{itemize}
		\item[(A)] 	as $p \to \infty$,
		\begin{align} \label{ass2}
		\frac{1}{p^2} \var\left(\A_{12} \trans \A_{13} \right) \to 0, \text{and}~~	\frac{1}{p^2}	\var(\A_1 \trans \B \A_1)\to 0,
		\end{align}
	where $\B$ is any deterministic matrix with bounded spectral norm;
		\item[(B)] $\|\bSig_1\| \leq C$ for some universal constant $C$, also the solution $x(z) \in \mathbb C^-$ to the following equation exists
		\begin{align}\label{eq:x}
			\frac {1}{x(z)}=1+\lim_{n\to \infty}\frac{2}{n}\tr \big[(\bSig_3+2x(z) \bSig_2-z\bI_p)^{-1}\bSig_2\big];
		\end{align}
		\item[(C)] $p,n\rightarrow \infty$ such that $p/n\rightarrow c\in (0,\infty)$.
	\end{itemize}
	
	Then, in probability, the
	empirical spectral distribution $F^{\K_n}$ converges weakly to a
	limiting spectral distribution  $F$ whose Stieltjes transform
	$s(z)$ is given by 
\begin{align}\label{lsd}
 s(z)=\lim_{p\to \infty}\frac{1}{p}\tr \big[(\bSig_3+2x(z) \bSig_2-z\bI_p)^{-1}\big].
\end{align}
	
\end{theorem}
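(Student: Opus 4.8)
The plan is to prove Theorem \ref{thm:thlsd} in two stages: first invoke the reduction already established in Proposition \ref{thm1}, and then compute the LSD of the simpler matrix $\W_n$ directly via the Stieltjes transform. By Proposition \ref{thm1}, under Assumptions (A) and (B) we have $L(F^{\K_n}, F^{\W_n}) \to 0$ in probability, so it suffices to identify the LSD of $\W_n = \M_1 + \bSig_3$. Since $\M_1 = \frac{2}{n-1}\sum_{i=1}^n (\A_i - \bar{\A})(\A_i - \bar{\A})\trans$ is the sample covariance matrix of the i.i.d. vectors $\A_1, \ldots, \A_n$ (population covariance $\bSig_2$, scaled by a factor two), I would first discard the negligible parts: replacing $\frac{2}{n-1}$ by $\frac{2}{n}$ and dropping the centering $\bar{\A}$ perturb $\W_n$ only by a matrix of bounded rank, which leaves the LSD unchanged by the rank inequality for the Levy distance. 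Hence the task reduces to finding the LSD of the deformed sample covariance matrix
\[
\tilde{\W}_n = \bSig_3 + \frac{2}{n}\sum_{i=1}^n \A_i \A_i\trans .
\]

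For $z \in \mC^+$ I set $\bR(z) = (\tilde{\W}_n - z\bI_p)^{-1}$ and aim to show $\frac1p\tr\bR(z)$ converges to the right-hand side of \eqref{lsd}. The key device is the leave-one-out (Sherman--Morrison) expansion: writing $\bR^{(i)}$ for the resolvent with the $i$-th summand removed, one has $\A_i\trans \bR \A_i = \beta_i\, \A_i\trans \bR^{(i)} \A_i$ with $\beta_i = (1 + \frac2n \A_i\trans \bR^{(i)}\A_i)^{-1}$. The two probabilistic inputs are: (i) the concentration of quadratic forms $\A_i\trans \B \A_i = \tr(\B\bSig_2) + o(p)$ guaranteed by the second part of Assumption (A), applied with $\B = \bR^{(i)}$, which forces $\beta_i \to x(z)$ where $x(z)$ solves $1/x(z) = 1 + \frac2n\tr(\bR\bSig_2)$; and (ii) the independence of $\A_i$ from $\bR^{(i)}$, which lets me average $\E[\A_i\A_i\trans \bR^{(i)} \mid \bR^{(i)}] = \bSig_2 \bR^{(i)} \approx \bSig_2 \bR$. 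Substituting these into the resolvent identity $(\bSig_3 - z\bI_p)\bR + \frac2n\sum_i \A_i\A_i\trans\bR = \bI_p$ and decoupling each $\A_i\A_i\trans\bR$ through $\beta_i$ yields the deterministic equivalent
\[
\bR(z) \approx (\bSig_3 + 2x(z)\bSig_2 - z\bI_p)^{-1},
\]
from which the self-consistent equation \eqref{eq:x} for $x(z)$ follows by substituting back into $1/x(z) = 1 + \frac2n\tr(\bR\bSig_2)$, and \eqref{lsd} follows by taking the normalized trace.

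To upgrade this heuristic to convergence in probability I would (a) control the first-order term by bounding $\|\E\bR - (\bSig_3 + 2x(z)\bSig_2 - z\bI_p)^{-1}\|$, collecting the errors from the quadratic-form concentration and from the replacements $\bR^{(i)} \to \bR$ and $\beta_i \to x(z)$; and (b) control the fluctuation by a martingale-difference decomposition of $\frac1p\tr\bR - \E\frac1p\tr\bR$ along the filtration generated by $\A_1, \ldots, \A_n$, each increment being negligible by the rank-one nature of removing one $\A_i$, which gives $\var(\frac1p\tr\bR) \to 0$. The Stieltjes continuity theorem then yields weak convergence of $F^{\tilde{\W}_n}$, hence of $F^{\W_n}$, to $F$, and combining with Proposition \ref{thm1} completes the proof. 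Existence of the solution $x(z)\in\mC^-$ to \eqref{eq:x} is granted by Assumption (B); its uniqueness and the verification that \eqref{lsd} is a genuine Stieltjes transform I would obtain from a contraction/monotonicity argument on the half-plane.

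The hard part will be the quadratic-form concentration with $\B = \bR^{(i)}$, because the coordinates of each $\A_i$ are highly dependent (every $\A_i$ is a vector of conditional expectations of sign functions), so the usual decomposition $\A_i = \bSig_2^{1/2}\bz_i$ with i.i.d.\ $\bz_i$ is unavailable and the standard Mar\u{c}enko--Pastur machinery does not apply off the shelf. Everything hinges on the variance bound in Assumption (A) delivering $\var(\A_i\trans \B \A_i) = o(p^2)$ uniformly over the random, $z$-dependent matrices $\B = \bR^{(i)}$ (whose spectral norm is bounded by $1/\Im z$), and on accumulating these $o(1)$ errors over the $n$ leave-one-out terms without blow-up. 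This non-standard coupling of the two covariance structures $\bSig_2$ and $\bSig_3$ is precisely why the limit departs from the generalized Mar\u{c}enko--Pastur law.
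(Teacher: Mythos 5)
Your proposal is correct and follows essentially the same route as the paper: reduction to $\frac{2}{n}\sum_i \A_i\A_i\trans + \bSig_3$ via Proposition \ref{thm1}, a martingale-difference decomposition to concentrate $\frac{1}{p}\tr\bR(z)$ around its mean, a leave-one-out (Sherman--Morrison) resolvent expansion combined with the quadratic-form variance bound of Assumption (A) to derive the deterministic equivalent $(\bSig_3 + 2x(z)\bSig_2 - z\bI_p)^{-1}$ and the self-consistent equation \eqref{eq:x}, and a separate uniqueness argument for $x(z)\in\mC^-$. The only cosmetic differences are the order of the steps and that the paper establishes uniqueness by a Cauchy--Schwarz/imaginary-part contradiction rather than a contraction argument, and controls only normalized traces rather than the operator-norm distance to the deterministic equivalent.
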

Recall the dominating matrix $\W_n$ in \eqref{wn},
which has the same LSD as the following matrix
\begin{align}\label{m2}
	\frac{2}{n}\sum_{i=1}^n \A_i \A\trans_i+\bSig_3.
\end{align}
The first part $ n^{-1}\sum_{i=1}^n \A_i \A\trans_i$ is the type of a sample covariance matrix  corresponding to the data sample $\{\A_i\}$ with population covariance matrix $\cov(\A_i)=\bSig_2$. However, the  components within  $\A_i$ are  nonlinearly correlated and thus can not be written in the form of independent components model such that $\A_i=\bSig^{1/2}_2\bz_i$. For those weakly dependent  data sample, \cite{bai2008large} proved that under certain conditions, the LSD of the sample covariance matrix  still follows  the generalized Mar\u{c}enko-Pastur law. One of the crucial conditions is that the variance of the quadratic forms $\A_i \trans \B \A_i$ is relatively small (see Theorem 1.1 in \citealt{bai2008large}), i.e.,
		\begin{align*}
			\var(\A_i \trans \B \A_i) =o(n^2),
		\end{align*}
 which is actually the second part of our assumption (A). Under this condition, the LSD of the first part  $n^{-1}\sum_{i=1}^n \A_i \A\trans_i$ remains the same as the generalized Mar\u{c}enko-Pastur law corresponding to the population covariance matrix $\bSig_2$.

On the other hand, the LSD of Hermitian matrix of the type $\X\bT\X\trans/n+\A$ has been studied in \cite{silverstein1995empirical} where $\X$ is assumed to be an $n\times p$ random matrix with  i.i.d. standardized entries, $\bT$ is a diagonal matrix having an LSD, $\A$ is an Hermitian matrix and the three matrices are independent. Under certain conditions, \cite{silverstein1995empirical}  proved that  the LSD of $\X\bT\X\trans/n+\A$ 
is a shift  of the LSD of $\A$. Intuitively, this is because the population version of $\X\bT\X\trans$ equals $(\tr \bT)  \bI_n$, which shares the same eigenvectors  as the matrix $\A$. However, in our case, the population version of the first  parts $n^{-1}\sum_{i=1}^n \A_i \A\trans_i$ in \eqref{m2} equals $\bSig_2$, whose eigenvectors might be different from the ones of $\bSig_3$. 
Therefore, we can not directly apply the results in  \cite{silverstein1995empirical}.  Using the  terminology of matrix subordination \citep{kargin2015subordination}, the limiting Stieltjes transform of $\X\bT\X\trans+\A$ is subordinated to the limiting Stieltjes transform of $\A$.  Roughly, our result  is the same as the model $2\bSig_2^{1/2} \X\trans \X  \bSig_2^{1/2}/n + \bSig_3$.  Our Theorem \ref{thm:thlsd} shows that
both the eigenvalues and eigenvectors of the  two matrices, $\bSig_2$ and $\bSig_3$, will contribute to the LSD of $\K_n$. Thus, it's a brand new LSD for covariance/correlation matrix.

Technically, the assumption (B) is a new condition and in Appendix, we prove the uniqueness of
$s(z)$ or $x(z)$ if it exists. Here we make some discussions on this condition. If $\bSig_3=\mathbf{0}$, the equation \eqref{eq:x} will be 
\begin{align*}
\lim_{p \to \infty}\frac{1}{p}\tr \big[(2x(z) \bSig_2-z\bI_p)^{-1}\big]=\frac{1-c-x}{cz},
\end{align*}
which means that the limiting Stieltjes transform of $\bSig_2$ exists and we solve the above equation to get $x(z)$. The Stieltjes transform of the LSD is then 
\begin{align*}
	s(z)=\lim_{p\to \infty}\frac{1}{p}\tr \big[(2x(z) \bSig_2-z\bI_p)^{-1}\big]=\frac{1-c-x}{cz},
\end{align*} 
and then we can get 
\begin{align*}
	-\frac{x}{z}=-\frac{1-c}{z}+c\cdot s(z),
\end{align*}
where the right hand side is exactly the limiting Stieltjes transform of $ \X  \bSig_2\X\trans /n$.  Thus, our result under the special case $\bSig_3=\mathbf{0}$ is consistent with the one in \cite{bai2008large}. Now we consider another special case that $\bSig_2=\bI_p/2$. From  \eqref{eq:x}  and \eqref{lsd}, we can get 
	\begin{align*}
 s(z)=\lim_{p \to \infty}\frac{1}{p}\tr \big[(\bSig_3+(x-z)\bI_p)^{-1}\big]=\frac{1-x}{xc},
\end{align*}
which yields 
\begin{align*}
x=\frac{1}{1+c\cdot s(z)}.
\end{align*}
This result is consistent with the one in \cite{silverstein1995empirical} when $\bT=\bI$. 

In summary, our new LSD extends the results in \cite{silverstein1995empirical} and \cite{bai2008large}.  For general $\bSig_2$ and $\bSig_3$, it is challenging to study the limits of \eqref{eq:x}. One special case is that $\bSig_2$ and $\bSig_3$ are simultaneously diagonalizable and  Toeplitz matrix is such an example, which will be studied in the next section.


\section{Gaussian ensemble}\label{main:guass}
As mentioned in the introduction, the Pearson correlation matrix has been thoroughly studied in random matrix theory. Generally, there is no explicit relation between Kendall's correlation and Pearson correlation; see  \cite{kendall1949rank} for more details. A special ensemble is Gaussian distribution  where  
Kendall's correlation has a monotonic correspondence with Pearson correlation.  This neat relation is presented in the following lemma  which is called Grothendieck's Identity in mathematical community.

	\begin{lemma}[Grothendieck's Identity] \label{clalem1}
	Consider a  bi-variate normal distribution:
	\begin{align*}
		\begin{pmatrix}
			z_1\\
			z_2
		\end{pmatrix} \sim N \left\{\begin{pmatrix}
			0\\
			0
		\end{pmatrix}, \left(\begin{array}{cc}
			1&  ~\rho \\
			\rho &~1
		\end{array}\right) \Large\right\},
	\end{align*}
	where $\rho \in [-1,1]$. We have
	\begin{align*}
		\E\left\{\sign(z_1)\sign(z_2)\right\}
		=4 \E \left\{  I(z_1, z_2>0)\right\}-1= \frac{2}{\pi} \arcsin{\rho}.
	\end{align*}
\end{lemma}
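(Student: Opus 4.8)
The plan is to prove Grothendieck's Identity, namely that for a standard bivariate normal $(z_1,z_2)$ with correlation $\rho$, we have $\E\{\sign(z_1)\sign(z_2)\} = \tfrac{2}{\pi}\arcsin\rho$. First I would establish the elementary reduction $\E\{\sign(z_1)\sign(z_2)\} = 4\,\E\{I(z_1>0,z_2>0)\} - 1$. This follows purely from the symmetry of the centered normal: since $\sign(z_k) = I(z_k>0) - I(z_k<0)$ and each $z_k$ is almost surely nonzero, one expands the product $\sign(z_1)\sign(z_2)$ into the four orthant indicators, uses the reflection symmetry $(z_1,z_2)\indist(-z_1,-z_2)$ to equate opposite orthant probabilities, and collects terms. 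Writing $q \defby \E\{I(z_1>0,z_2>0)\}$, the diagonal orthants each have probability $q$ and the off-diagonal each have probability $\tfrac12 - q$, so the expectation equals $2q - 2(\tfrac12 - q) = 4q - 1$, which is the middle expression in the statement.

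The remaining and essential step is to compute the orthant probability $q = \P(z_1>0,\,z_2>0)$ and show it equals $\tfrac14 + \tfrac{1}{2\pi}\arcsin\rho$. The cleanest route I would take is the standard geometric argument for the bivariate normal orthant probability. Represent $(z_1,z_2)$ as $z_1 = g_1$, $z_2 = \rho g_1 + \sqrt{1-\rho^2}\,g_2$ with $g_1,g_2$ independent standard normals, so that the event $\{z_1>0,z_2>0\}$ becomes a wedge (an angular sector) in the $(g_1,g_2)$-plane. Because the law of $(g_1,g_2)$ is rotationally invariant, the probability of a sector equals the opening angle divided by $2\pi$. A short trigonometric computation identifies the opening angle of this wedge as $\tfrac{\pi}{2} + \arcsin\rho$ (equivalently $\arccos(-\rho)$), giving $q = \tfrac{1}{2\pi}\left(\tfrac{\pi}{2} + \arcsin\rho\right) = \tfrac14 + \tfrac{1}{2\pi}\arcsin\rho$. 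Substituting into $4q-1$ then yields exactly $\tfrac{2}{\pi}\arcsin\rho$.

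An alternative I would keep in reserve, in case a more self-contained analytic derivation is preferred, is the differentiation-under-the-integral (Price's theorem) approach: set $\phi(\rho) = \E\{\sign(z_1)\sign(z_2)\}$, differentiate with respect to $\rho$ by differentiating the bivariate normal density, and use the identity $\partial_\rho f_\rho(z_1,z_2) = \partial_{z_1}\partial_{z_2} f_\rho(z_1,z_2)$ together with integration by parts. Since $\partial_{z_k}\sign(z_k) = 2\delta(z_k)$ in the distributional sense, this collapses the two-dimensional integral to the value of the density along $z_1=z_2=0$, producing $\phi'(\rho) = \tfrac{2}{\pi}(1-\rho^2)^{-1/2}$; integrating with the boundary condition $\phi(0)=0$ (independence) recovers $\phi(\rho)=\tfrac{2}{\pi}\arcsin\rho$.

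The main obstacle is not conceptual difficulty but rigor in the chosen step: in the geometric argument one must correctly pin down the sign and range of the opening angle across all $\rho\in[-1,1]$ (including the degenerate endpoints $\rho=\pm1$, handled by continuity), and in the Price's-theorem variant one must justify interchanging differentiation and integration and handling the Dirac masses properly rather than formally. I expect the geometric/rotation-invariance computation of the sector angle to be the delicate bookkeeping, so I would present that argument carefully and verify it against the two sanity checks $\rho=0$ (giving $q=\tfrac14$, independence) and $\rho=1$ (giving $q=\tfrac12$, perfect positive dependence).
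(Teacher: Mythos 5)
Your proposal is correct, but there is nothing in the paper to compare it against: the paper states this lemma as a known classical result (``Grothendieck's Identity'') and never proves it, using it only as input to derive the relations \eqref{s1s2s3} for $\bSig_1,\bSig_2,\bSig_3$ in the Gaussian case. Your argument therefore supplies a proof the paper omits, and both of your routes are sound. The symmetry reduction is clean: writing the product of signs as a signed sum of the four orthant indicators and using $(z_1,z_2)\indist(-z_1,-z_2)$ together with $\mathbb{P}(z_1>0)=\tfrac12$ gives $\E\{\sign(z_1)\sign(z_2)\}=4q-1$ with $q=\mathbb{P}(z_1>0,z_2>0)$. The sector computation is also right: with $z_1=g_1$, $z_2=\rho g_1+\sqrt{1-\rho^2}\,g_2$ and $\alpha=\arcsin\rho\in[-\tfrac{\pi}{2},\tfrac{\pi}{2}]$, the event becomes $\{\theta\in(-\tfrac{\pi}{2},\tfrac{\pi}{2})\}\cap\{\sin(\theta+\alpha)>0\}=\{\theta\in(-\alpha,\tfrac{\pi}{2})\}$ in polar coordinates, an arc of length $\tfrac{\pi}{2}+\arcsin\rho$, so rotational invariance yields $q=\tfrac14+\tfrac{1}{2\pi}\arcsin\rho$ and hence $4q-1=\tfrac{2}{\pi}\arcsin\rho$; the endpoints $\rho=\pm1$ follow by continuity as you note. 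The Price's-theorem alternative, $\phi'(\rho)=\tfrac{2}{\pi}(1-\rho^2)^{-1/2}$ with $\phi(0)=0$, is equally standard and would need the justification of the distributional integration by parts that you flag, so the geometric argument is the better one to present.
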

Assume $\bx_1,\ldots,\bx_n,i.i.d \sim N(0,\bSig)$ where $\bSig$ is a correlation matrix, by Lemma \ref{clalem1}, we can show that
\begin{align}\label{s1s2s3}
	\bSig_1=&\frac{2}{\pi} \arcsin(\bSig),~\bSig_2=\frac{2}{\pi} \arcsin(\bSig/2), \nonumber\\
	~\bSig_3=& \frac{2}{\pi} \arcsin(\bSig)-\frac{4}{\pi} \arcsin(\bSig/2).
\end{align}
Thus, for Gaussian distribution, Kendall's rank correlation matrix is determined by the Pearson's correlation matrix $\bSig$. 

In this section, we consider the LSD of $\K_n$ for Gaussian ensembles from which can shed new light on Kendall's correlation matrix  and also its connections with the sample covariance/correlation matrix.
\begin{proposition} \label{thmnorml}
	Assume $\bx_1,\ldots,\bx_n,i.i.d \sim N(0,\bSig)$ where $\bSig$ is a correlation matrix. Under the Assumptions (B) and (C) in Theorem \ref{thm:thlsd}, the conclusion of Theorem \ref{thm:thlsd} holds.
\end{proposition}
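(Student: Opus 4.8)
The key observation is that Proposition~\ref{thmnorml} merely removes Assumption~(A) of Theorem~\ref{thm:thlsd}; hence the entire task is to show that, in the Gaussian model with a correlation matrix $\bSig$, the two variance bounds in \eqref{ass2} are automatic consequences of Assumption~(B). Once they are in place, the conclusion follows verbatim from Theorem~\ref{thm:thlsd}. The plan is to exploit the explicit Gaussian representations: by \eqref{s1s2s3} we have $\bSig_1=\frac{2}{\pi}\arcsin(\bSig)$, $\bSig_2=\frac{2}{\pi}\arcsin(\bSig/2)$ and $\bSig_3=\bSig_1-2\bSig_2$, while the $k$-th coordinate of $\A_i$ is the scalar $2\Phi(x_{ik})-1$ (each marginal being standard normal) and the $k$-th coordinate of $\A_{ij}$ is $\sign(x_{ik}-x_{jk})$.

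First I would extract the boundedness consequences of Assumption~(B). Since $\arcsin t=\sum_{m\ge0}a_m t^{2m+1}$ has nonnegative coefficients, the entrywise map gives $\arcsin(\bSig)=\sum_{m\ge0}a_m\bSig^{\circ(2m+1)}$, a sum of Hadamard powers; by the Schur product theorem each term is positive semidefinite, so $\arcsin(\bSig)\succeq\bSig\succeq0$ and likewise $\arcsin(\bSig/2)\preceq\arcsin(\bSig)$. Consequently $\|\bSig\|\le\frac{\pi}{2}\|\bSig_1\|\le\frac{\pi}{2}C$, $\|\bSig_2\|\le\|\bSig_1\|\le C$ and $\|\bSig_3\|\le 3C$, so all the relevant population matrices have bounded spectral norm. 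Moreover $\|\bSig_1\|\le C$ forces $\sum_{k,l}(\arcsin\Sigma_{kl})^2=\frac{\pi^2}{4}\|\bSig_1\|_2^2\le\frac{\pi^2}{4}p\,\|\bSig_1\|^2=O(p)$, and since $|\arcsin t|\ge|t|$ this yields the crucial Frobenius bound $\sum_{k\ne l}\Sigma_{kl}^2=O(p)$, equivalently $\sum_{k\ne l}|\Sigma_{kl}|=O(p^{3/2})$ by Cauchy--Schwarz.

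Next I would verify the two parts of \eqref{ass2}. For the quadratic form, write $h(\bx_1)=\A_1\trans\B\A_1=\sum_{k,l}B_{kl}\,g(x_{1k})g(x_{1l})$ with $g(t)=2\Phi(t)-1$, so that $|g|\le1$ and $|g'|=2\phi\le\sqrt{2/\pi}$. Applying the Gaussian Poincar\'e inequality to $\bx_1\sim N(0,\bSig)$ gives $\var(h)\le\|\bSig\|\,\E\|\nabla h\|^2$; computing $\partial_m h=g'(x_{1m})\sum_l(B_{ml}+B_{lm})g(x_{1l})$ and using $\|\cov(\A_1)\|=\|\bSig_2\|\le C$ bounds $\E\|\nabla h\|^2$ by a constant multiple of $\|\B\|_2^2\le p\|\B\|^2$, so $\var(\A_1\trans\B\A_1)=O(p)=o(p^2)$. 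For the bilinear form I would expand $\A_{12}\trans\A_{13}=\sum_{k=1}^p Y_k$ with $Y_k=\sign(x_{1k}-x_{2k})\sign(x_{1k}-x_{3k})$, so that $\var(\A_{12}\trans\A_{13})=\sum_k\var(Y_k)+\sum_{k\ne l}\cov(Y_k,Y_l)$. The diagonal sum is $O(p)$ since $|Y_k|\le1$, and the point is that $Y_k$ and $Y_l$ are coupled only through the cross-correlations $\cov(x_{ak},x_{bl})=\Sigma_{kl}\delta_{ab}$, so $\cov(Y_k,Y_l)$ vanishes at $\Sigma_{kl}=0$. Differentiating $\E[Y_kY_l]$ in this single coupling parameter via Gaussian interpolation (Price's theorem), with the derivatives of the sign factors producing bounded density terms, I would establish $|\cov(Y_k,Y_l)|\le C|\Sigma_{kl}|$, whence $\sum_{k\ne l}|\cov(Y_k,Y_l)|=O(p^{3/2})=o(p^2)$.

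The main obstacle is precisely this last bilinear estimate: controlling $\cov(Y_k,Y_l)$ amounts to a fourth-order sign moment, i.e.\ an orthant-type probability for the six correlated Gaussians $(x_{1k},x_{2k},x_{3k},x_{1l},x_{2l},x_{3l})$, which is exactly the kind of integral that is notoriously delicate (cf.\ the discussion following \eqref{scm} of \cite{esscher1924method} and \cite{childs1967reduction}). The strategy is to avoid evaluating it explicitly and instead extract only the qualitative facts that this covariance is Lipschitz in the lone coupling correlation $\Sigma_{kl}$ and vanishes at $\Sigma_{kl}=0$; the bound $\sum_{k\ne l}|\Sigma_{kl}|=O(p^{3/2})$ then absorbs the off-diagonal sum. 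With both displays in \eqref{ass2} confirmed and Assumptions~(B) and~(C) in force by hypothesis, Theorem~\ref{thm:thlsd} applies directly and yields the stated limiting spectral distribution, completing the proof.
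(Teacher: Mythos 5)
Your proposal is correct and, at the top level, mirrors the paper: both reduce Proposition \ref{thmnorml} to verifying the two variance conditions in \eqref{ass2} for the Gaussian model and then invoking Theorem \ref{thm:thlsd}, and both handle the quadratic form $\var(\A_1\trans\B\A_1)$ with the Gaussian Poincar\'e inequality (the paper's Lemma \ref{bound} is essentially your computation). The genuine difference is in the bilinear term. The paper evaluates $\var(\A_{12}\trans\A_{13})$ \emph{exactly}: by Esscher's classical fourth-order sign-moment identity (Lemma \ref{clalem2}), each covariance $\cov(Y_k,Y_l)$ in your notation equals $\left\{\frac{2}{\pi}\arcsin(\Sigma_{kl})\right\}^2-\left\{\frac{2}{\pi}\arcsin(\Sigma_{kl}/2)\right\}^2$, so that $\var(\A_{12}\trans\A_{13})=\tr(\bSig_1^2)-\tr(\bSig_2^2)\le p\|\bSig_1\|^2=O(p)$ (Lemma \ref{lem2-var}). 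You instead establish only the Lipschitz bound $|\cov(Y_k,Y_l)|\le K|\Sigma_{kl}|$ by Gaussian interpolation in the single coupling parameter, the delta functions produced by differentiating the signs being integrable against $(1-t^2\Sigma_{kl}^2)^{-1/2}$ on $[0,1]$; combined with your Frobenius bound $\sum_{k\ne l}|\Sigma_{kl}|=O(p^{3/2})$, this gives $\var(\A_{12}\trans\A_{13})=O(p^{3/2})$, weaker than the paper's $O(p)$ but amply sufficient for $o(p^2)$. What each route buys: the paper's exact identity is sharper (quadratic rather than linear in $\Sigma_{kl}$) and is reused elsewhere, e.g.\ to bound $\E\|\A_k\|^4$ and the quadratic-form fluctuations inside the proof of Theorem \ref{thm:thlsd}; your interpolation argument avoids the delicate classical fourth-order computation entirely and is more self-contained and potentially more portable beyond the Gaussian case. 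One genuinely valuable addition on your side: you deduce $\|\bSig\|\le\frac{\pi}{2}\|\bSig_1\|$ and $\|\bSig_2\|\le\|\bSig_1\|$ from Assumption (B) via the Schur product theorem applied to the arcsine series — a boundedness step the paper actually needs (its Lemma \ref{bound} and the bound on $d_{k3}$ involve $\|\bSig\|$) but leaves implicit.
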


\begin{remark}
	It is noted that although we consider the Gaussian ensemble, the results actually cover a wider range of distributions, which is called non-paranormal distribution \citep{liu2009nonparanormal} due to the monotonic invariance of Kendall's rank correlation matrix.
	To be specific, a random vector $Y=(Y_1,\cdots,Y_p)\trans \in \mR^p$ is said to have a non-paranormal distribution if there exist monotone functions $\left\{f_j\right\}_{j=1}^p$ such that $\left( f_1(Y_1),\cdots,f_p(Y_p)\right) \sim N(\mu,\bSig)$.

\end{remark}

The proof of Proposition \ref{thmnorml} is to check the assumption \eqref{ass2} for Gaussian distribution.  Specially, for the normal distribution or non-paranormal distribution, we  can calculate the variance of $\A_{12} \trans \A_{13}$ explicitly which is based on the classical results in \cite{esscher1924method} and control the variance of the quadratic form $\A_1 \trans \B \A_1$ using Poincar\'{e} inequality. Hence,  Assumption (A) holds for Gaussian distribution and the detailed proof is presented in Appendix. Next, we consider some examples to illustrate the result.

\subsection{Independent case}
	A very special case is the standard multivariate normal distribution, i.e., $\bSig=\bI_p$.  By the monotonic invariance of Kendall's rank correlation matrix, it is equivalent to the independent case considered by \cite{bandeira2017marvcenko}, \cite{leung2018testing}, \cite{bao2019tracy}, and \cite{ZWL2021Kendall}.

When $\bSig=\bI_p$, we know $\bSig_2=\bSig_3=\frac{1}{3} \bI_p$. Intuitively, the matrix given in \eqref{m2} reduces to a standard sample covariance matrix  corresponding to the population covariance matrix $\bSig_2=\frac{2}{3} \bI_p$ and the deterministic matrix $\bSig_3=\frac{1}{3} \bI_p$. This explains that its LSD is  $\frac{2}{3}\text{MP}+\frac{1}{3}$. As an illustration of our main theorems,  we demonstrate this result using our Theorem~\ref{thm:thlsd} and Proposition \ref{thmnorml} in the following.

Starting from  equation~\eqref{eq:x}, we have 
\begin{align*}
		\frac{1}{x(z)}=1+\frac{2c}{1+2x(z)-3z},
\end{align*}
	and this equation has a unique solution in $\mathbb{C}^{-}$, 
\begin{align*}
	x(z)=\frac {1}{4}\Big\{1-2c+3z-\sqrt{(2y-3z-1)^2-8(3z-1)}\Big\}.
\end{align*}
	Plugging it into \eqref{lsd}, we obtain that
\begin{align*}
	s(z)=\frac{1-\frac{2}{3}c-z+\sqrt{(z-1-\frac{2}{3}c)^2-\frac{16}{9}c}}{\frac{4}{3}c(z-\frac{1}{3})},
\end{align*}
	which is the Stieltjes transform of $\frac{2}{3}\text{MP}+\frac{1}{3}$  as shown in \cite{bandeira2017marvcenko}.

\subsection{MA(1) model}
Next, we consider an MA(1) model with population correlation matrix  $\bSig$  as follows,
\begin{align*}
\bSig=\bSig(\rho)=\begin{pmatrix}
		1 & \rho &  0&\cdots&0\\
		\rho & 1 & \rho&\ddots&0\\
		0&\ddots&\ddots&\ddots&0\\
		\vdots&&\rho&1 &\rho\\
	0&\cdots&0&\rho&1
	\end{pmatrix},
\end{align*}
where $\rho \in (-1/2,1/2)$. The eigenvalues of $\bSig(\rho)$ are given by
\begin{align*}
\lambda_k(\rho)=1+2 \rho \cos\frac{k \pi}{p+1},~k=1,\ldots,p
\end{align*}
and  the corresponding eigenvectors are 
\begin{align*}
	\bu_k=\sqrt{\frac{2}{p+1}}\bigg(\sin\frac{k \pi}{p+1},\sin\frac{2 k \pi}{p+1}, \cdots, \sin\frac{p k \pi}{p+1}\bigg) \trans.
\end{align*}
A detailed calculation of the eigenvalues and eigenvectors  can be found in Lemma 1 of  \cite{wang2011limiting}. It is noted that the eigenvectors  of $\bSig(\rho)$ do not depend on the correlation parameter $\rho$. Thus, $\bSig_2$ and $\bSig_3$ share the same eigenvectors and we can derive the two limits of Theorem \ref{thm:thlsd} as follows.

\begin{proposition} \label{case2}
Assume that $\bx_1,\ldots, \bx_n,i.i.d. \sim N \left(0,\bSig(\rho)\right)$. Then the Stieltjes transform $s(z)$ of the LSD of $\K_n$ satisfies
\begin{align}\label{lm2}
	s(z)=-\frac{1}{\sqrt{\Big(\frac{1}{3}+\frac{2x(z)}{3}-z\Big)^2-4\Big(\frac{2}{\pi} \arcsin {\rho}+\frac{4(x(z)-1)}{\pi} \arcsin\frac{\rho}{2}\Big)^2}}.
	\end{align}
Here $s(z)\in \mathbb{C}^+$ and $x(z)\in \mathbb{C}^-$ satisfies	
\begin{align}\label{ma}
	\frac{1}{2c}\left(\frac{1}{x(z)}-1\right)=\frac{1-c(x(z),\rho)(1+2x(z)-3z)}{3} s(z)+c(x(z),\rho)
\end{align}

where 
\begin{align*}
c(x,\rho)=	 \left\{ \begin{array}{ll}\displaystyle
		\frac{\arcsin\frac{\rho}{2}}{\arcsin {\rho}+2(x-1)\arcsin\frac{\rho}{2}},~~ & \mbox{if $\rho \neq 0$},\\
		0 & \mbox{if $\rho=0$}.\end{array} \right.
\end{align*}
\end{proposition}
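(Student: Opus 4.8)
The plan is to exploit the structural fact, noted just before the statement, that in the MA(1) model $\bSig_2$ and $\bSig_3$ are simultaneously diagonalizable, which collapses the two matrix limits in Theorem~\ref{thm:thlsd} to scalar integrals. First I would make this explicit. Applying the element-wise relations \eqref{s1s2s3} to the tridiagonal $\bSig(\rho)$ and using $\arcsin 0=0$ and $\arcsin 1=\pi/2$, one finds that $\bSig_2$ is tridiagonal Toeplitz with diagonal $1/3$ and off-diagonal $b_2:=\frac{2}{\pi}\arcsin(\rho/2)$, and $\bSig_3$ is tridiagonal Toeplitz with diagonal $1/3$ and off-diagonal $b_3:=\frac{2}{\pi}\arcsin\rho-\frac{4}{\pi}\arcsin(\rho/2)$. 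Since the eigenvectors $\bu_k$ of a tridiagonal Toeplitz matrix depend only on $p$, both matrices are diagonalized by $\{\bu_k\}$, with a diagonal-$a$, off-diagonal-$b$ matrix carrying eigenvalue $a+2b\cos\frac{k\pi}{p+1}$. Hence $\bSig_3+2x(z)\bSig_2-z\bI_p$ has eigenvalues $\mu_k=\alpha+2\beta\cos\frac{k\pi}{p+1}$, where $\alpha=\frac{1}{3}+\frac{2x(z)}{3}-z$ and $\beta=b_3+2x(z)b_2=\frac{2}{\pi}\arcsin\rho+\frac{4(x(z)-1)}{\pi}\arcsin(\rho/2)$.

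Next I would convert the normalized traces to integrals. As the nodes $\frac{k\pi}{p+1}$ equidistribute in $[0,\pi]$, \eqref{lsd} gives
\[
 s(z)=\lim_{p\to\infty}\frac1p\sum_{k=1}^p\frac{1}{\mu_k}=\frac1\pi\int_0^\pi\frac{d\theta}{\alpha+2\beta\cos\theta}.
\]
The elementary evaluation $\frac1\pi\int_0^\pi(\alpha+2\beta\cos\theta)^{-1}d\theta=(\alpha^2-4\beta^2)^{-1/2}$, obtained via $w=e^{i\theta}$ and residues, produces \eqref{lm2}; the sign of the square root is pinned down by requiring $s(z)\in\mathbb{C}^+$, equivalently $s(z)\sim-1/z$ as $z\to\infty$.

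Third I would treat \eqref{eq:x}. Writing the $k$-th eigenvalue of $\bSig_2$ as $\frac13+2b_2\cos\theta_k$, the trace limit is $2c\cdot\frac1\pi\int_0^\pi\frac{\frac13+2b_2\cos\theta}{\alpha+2\beta\cos\theta}\,d\theta$. The split $\frac13+2b_2\cos\theta=\frac{b_2}{\beta}(\alpha+2\beta\cos\theta)+\big(\frac13-\frac{b_2\alpha}{\beta}\big)$ reduces the integrand to a constant plus a multiple of $(\alpha+2\beta\cos\theta)^{-1}$, so the integral equals $\frac{b_2}{\beta}+\big(\frac13-\frac{b_2\alpha}{\beta}\big)s(z)$. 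Inserting this into \eqref{eq:x}, dividing by $2c$, and identifying $b_2/\beta=c(x,\rho)$ together with $\alpha=(1+2x-3z)/3$ yields \eqref{ma} after collecting the coefficient of $s(z)$.

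The bulk is bookkeeping, and two points need care. The branch choice in \eqref{lm2} must be verified to keep $s(z)\in\mathbb{C}^+$ throughout $\mathbb{C}^+$; for the admissible root $x(z)\in\mathbb{C}^-$ of \eqref{ma} the denominator $\alpha+2\beta\cos\theta$ stays bounded away from zero on $[0,\pi]$, so the Riemann sums converge to the integrals and the limit may be interchanged with the sum by dominated convergence. Second, the whole reduction presupposes Assumption~(A), supplied here by Proposition~\ref{thmnorml}, and Assumption~(B) for the existence of $x(z)\in\mathbb{C}^-$; I would simply observe that \eqref{lm2}--\eqref{ma} is the verbatim specialization of \eqref{lsd}--\eqref{eq:x}, so existence and uniqueness are inherited from Theorem~\ref{thm:thlsd} rather than argued anew.
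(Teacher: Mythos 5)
Your proposal is correct and follows essentially the same route as the paper's own proof: identify $\bSig_2$ and $\bSig_3$ as tridiagonal Toeplitz matrices with explicit eigenvalues $a+2b\cos\frac{k\pi}{p+1}$, pass the normalized traces to integrals evaluated by residues as $(\alpha^2-4\beta^2)^{-1/2}$, and use the split of the $\bSig_2$-weighted trace with $b_2/\beta=c(x,\rho)$ to recover \eqref{ma}. The only differences are cosmetic: you justify the limits by equidistribution/Riemann sums where the paper invokes Szeg\"{o}'s theorem, and you explicitly fix the square-root branch via $s(z)\in\mathbb{C}^+$, a point the paper's proof leaves implicit.
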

By solving  \eqref{lm2} and \eqref{ma} in Proposition \ref{case2}, we can derive the Stieltjes transform $s(z)$ of the LSD of $\K_n$ when samples are from a MA(1) model.

\subsection{Toeplitz structure}
Last but not least, we consider a more general case that  the population  correlation matrix $\bSig$ has a Toeplitz structure
\begin{align}\label{top}
	\bSig=\begin{pmatrix}
		1 & \rho_1 & \rho_2& \cdots & \rho_{p-1}\\
	\rho_1& 1 & \rho_1  &\cdots&\rho_{p-2} \\
 \vdots & \ddots & \ddots & \ddots&\vdots\\
  \rho_{p-2}&\cdots& \rho_1 & 1& \rho_1\\
\rho_{p-1}&\cdots& \cdots & \rho_1& 1\\
	\end{pmatrix},
\end{align} 
where the correlations are absolutely summable
\begin{align} \label{ab-sum}
	\sum_{k=1}^{\infty} |\rho_k| < \infty.
\end{align}
Define the function 
\begin{align*}
	f(\theta)=1+\sum_{k=1}^{\infty} \rho_k(e^{i k\theta}+e^{-i k\theta})=1+2 \sum_{k=1}^{\infty} \rho_k\cos{(k\theta)},
\end{align*}
whose Fourier series are exactly $(1,\rho_1,\cdots)$. Szeg\"{o}  Theorem \citep{gray2006toeplitz} shows that the eigenvalues of $\bSig$ can be approximated by
\begin{align*}
	f\left(\frac{k \pi}{p+1}\right),~k=1,\cdots,p.
\end{align*}
For the limit in \eqref{eq:x}, which involves two Toeplitz matrices, we can not apply  Szeg\"{o}  Theorem directly.   However, a Toeplitz matrix can be approximated by a circulant matrix    \citep[Lemma 11]{gray2006toeplitz} whose eigenvectors are universal for its entries. By Theorems 11 and 12 of \cite{gray2006toeplitz}, under some mild conditions, we have the following limit.

\begin{proposition}\label{case3}
	For the Toeplitz matrix $\bSig$ defined in \eqref{top}, we have
	\begin{align}\label{eq:Toep}
		\lim_{p \to \infty}\frac{1}{p}\tr \big[(\bSig_3+2x \bSig_2-z\bI_p)^{-1}\bSig_2\big]=\frac{1}{2\pi}\int_{0}^{2\pi} \frac{f_2(\theta)}{f_1(\theta)} d\theta,
	\end{align}
	where 
	\begin{align*}
		f_1(\theta)=&\frac{1}{3}+\frac{2x}{3}-z+2 \sum_{k=1}^{\infty} \Big\{\frac{2}{\pi} \arcsin {\rho_k}+\frac{4(x-1)}{\pi} \arcsin\frac{\rho_k}{2}\Big\}\cos{(k\theta)},\\
		f_2(\theta)=&\frac{1}{3}+\frac{4}{\pi} \sum_{k=1}^{\infty}  \arcsin\frac{\rho_k}{2} \cos{(k\theta)}.
	\end{align*}
\end{proposition}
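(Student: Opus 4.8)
The plan is to recognize both matrices inside the trace as Toeplitz matrices, replace them by their circulant counterparts, and evaluate the resulting expression as a Riemann sum. First I would identify the relevant symbols. Since $\bSig$ is Toeplitz and the $\arcsin$ transform acts entrywise, the relations \eqref{s1s2s3} show that $\bSig_2=\frac{2}{\pi}\arcsin(\bSig/2)$ and $\bSig_3=\frac{2}{\pi}\arcsin(\bSig)-\frac{4}{\pi}\arcsin(\bSig/2)$ are real symmetric Toeplitz matrices. Writing $\bT_2:=\bSig_2$ and $\bT_1:=\bSig_3+2x\bSig_2-z\bI_p$, a direct reading of the diagonals gives that $\bT_2$ has symbol $f_2$ and $\bT_1$ has symbol $f_1$: the diagonal terms are $\frac{2}{\pi}\arcsin(1/2)=\frac13$ and $\frac13+\frac{2x}{3}-z$ respectively, and the $k$-th off-diagonals produce exactly the cosine coefficients displayed in the statement. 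Because $\sum_k|\rho_k|<\infty$ forces $\rho_k\to0$, and $\arcsin$ is Lipschitz on $[-\tfrac12,\tfrac12]$ (so $|\arcsin(\rho_k/2)|\le \tfrac{2}{\sqrt3}|\rho_k/2|$, with a matching bound for $\arcsin(\rho_k)$ once $|\rho_k|\le\tfrac12$), both $f_1$ and $f_2$ lie in the Wiener class, i.e. have absolutely summable Fourier coefficients and are continuous and $2\pi$-periodic.

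Next I would pass from Toeplitz to circulant matrices via the asymptotic-equivalence machinery of \cite{gray2006toeplitz}. Let $\widetilde{\bT}_1,\widetilde{\bT}_2$ denote the circulant approximations whose eigenvalues are $f_1(2\pi k/p)$ and $f_2(2\pi k/p)$, $k=0,\dots,p-1$; they are simultaneously diagonalized by the Fourier matrix. By Gray's Lemma 11 and Theorems 11--12, membership in the Wiener class yields the asymptotic equivalences $\bT_1\sim\widetilde{\bT}_1$ and $\bT_2\sim\widetilde{\bT}_2$, meaning that the spectral norms are uniformly bounded (by $\sup_\theta|f_j(\theta)|<\infty$, since each finite section is a compression of the bounded Laurent operator) and that $p^{-1/2}\|\bT_j-\widetilde{\bT}_j\|_2\to0$. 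Asymptotic equivalence is preserved under products and under inversion provided the inverses are uniformly bounded, and it leaves the normalized trace limit unchanged; hence it remains only to control $\bT_1^{-1}$.

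The key step is a uniform lower bound on $\bT_1$. Both $\bSig_2$ and $\bSig_3$ are genuine covariance matrices ($\bSig_2=\cov(\A_i)$, $\bSig_3=\cov(\epsilon_{ij})$), hence real symmetric and positive semidefinite, so for any unit vector $\bv$ the quantities $\bv^*\bSig_2\bv$ and $\bv^*\bSig_3\bv$ are nonnegative reals and
\begin{align*}
\Im\big(\bv^*\bT_1\bv\big)=2\,\Im(x)\,\bv^*\bSig_2\bv-\Im(z)\le -\Im(z)<0,
\end{align*}
using $x\in\mC^-$ and $z\in\mC^+$. By Cauchy--Schwarz $\|\bT_1\bv\|\ge|\bv^*\bT_1\bv|\ge\Im(z)$, so $\|\bT_1^{-1}\|\le 1/\Im(z)$ uniformly in $p$; the same estimate gives $\|\widetilde{\bT}_1^{-1}\|\le1/\Im(z)$ because the symbol of $\bSig_2$ is nonnegative, whence $|\Im f_1(\theta)|\ge\Im(z)$ at every frequency. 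Consequently $\bT_1^{-1}\bT_2\sim\widetilde{\bT}_1^{-1}\widetilde{\bT}_2$, and
\begin{align*}
\lim_{p\to\infty}\Big|\tfrac1p\tr\big(\bT_1^{-1}\bT_2\big)-\tfrac1p\tr\big(\widetilde{\bT}_1^{-1}\widetilde{\bT}_2\big)\Big|=0.
\end{align*}

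Finally, since $\widetilde{\bT}_1$ and $\widetilde{\bT}_2$ are co-diagonalizable,
\begin{align*}
\tfrac1p\tr\big(\widetilde{\bT}_1^{-1}\widetilde{\bT}_2\big)=\tfrac1p\sum_{k=0}^{p-1}\frac{f_2(2\pi k/p)}{f_1(2\pi k/p)},
\end{align*}
which is a Riemann sum for the continuous bounded integrand $f_2/f_1$ and converges to $\frac{1}{2\pi}\int_0^{2\pi}\frac{f_2(\theta)}{f_1(\theta)}\,d\theta$, giving \eqref{eq:Toep}. I expect the main obstacle to be the inversion step: because $x$ and $z$ are complex and $\bSig_2,\bSig_3$ need not commute, $\bT_1$ is not normal, so one cannot diagonalize it directly and must instead combine the numerical-range bound above with Gray's preservation of asymptotic equivalence under inversion and multiplication.
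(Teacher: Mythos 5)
Your proposal is correct and takes essentially the same route as the paper: the paper proves this proposition precisely by approximating the two Toeplitz matrices by circulants \citep[Lemma 11]{gray2006toeplitz} and invoking Gray's asymptotic-equivalence theorems (Theorems 11--12), with the absolute summability \eqref{ab-sum} and the bound on $\arcsin(\cdot)$ ensuring Wiener-class symbols $f_1,f_2$. Your write-up merely fills in details the paper delegates to the cited reference, most usefully the uniform invertibility bound $\|(\bSig_3+2x\bSig_2-z\bI_p)^{-1}\|\le 1/\Im(z)$ (from $\bSig_2\succeq 0$, $x\in\mC^-$, $z\in\mC^+$) that justifies preservation of asymptotic equivalence under inversion, and the final Riemann-sum passage to the integral.
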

The absolutely summable condition \eqref{ab-sum} and the bound for $\arcsin(\cdot)$ guarantee the existence of the Fourier functions $f_1(\cdot)$ and $f_2(\cdot)$.  By solving \eqref{eq:x} using the limit \eqref{eq:Toep} in Proposition \ref{case3}, we can theoretically derive the $x(z)$. For the  limit \eqref{lsd} in Theorem \ref{thm:thlsd}, Szeg\"{o}  Theorem can yields the result directly. In summary,  we can obtain the Stieltjes transform $s(z)$ of the LSD of $\K_n$ when samples are from the Toeplitz covariance matrix model as follows. 
\begin{proposition}
Assume that $\bx_1,\ldots, \bx_n,i.i.d. \sim N \left(0,\bSig\right)$ where $\bSig$ is a Toeplitz matrix \eqref{top}. Then the Stieltjes transform $s(z)$ of the LSD of $\K_n$ satisfies
\begin{align*}
s(z)=\lim_{p \to \infty} \frac{1}{p}\tr (\bSig_3+2x \bSig_2-z\bI_p)^{-1}=\frac{1}{2\pi}\int_{0}^{2\pi} \frac{1}{f_1(\theta)} d\theta,
\end{align*}
where 
\begin{align*}
	\frac{1}{x(z)}=1+\frac{c}{\pi}\int_{0}^{2\pi} \frac{f_2(\theta)}{f_1(\theta)} d\theta.
\end{align*}
\end{proposition}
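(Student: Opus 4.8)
The plan is to combine the two results already assembled in this section. Theorem~\ref{thm:thlsd} reduces the computation of the Stieltjes transform of the LSD of $\K_n$ to the two deterministic limits in \eqref{eq:x} and \eqref{lsd}, namely
\begin{align*}
\frac{1}{x(z)}=1+\lim_{n\to\infty}\frac{2}{n}\tr\big[(\bSig_3+2x(z)\bSig_2-z\bI_p)^{-1}\bSig_2\big]
\quad\text{and}\quad
s(z)=\lim_{p\to\infty}\frac{1}{p}\tr\big[(\bSig_3+2x(z)\bSig_2-z\bI_p)^{-1}\big].
\end{align*}
Since Proposition~\ref{thmnorml} has already verified Assumption~(A) for the Gaussian (and non-paranormal) case, both limits exist and Theorem~\ref{thm:thlsd} applies once the resolvent traces are evaluated for the Toeplitz $\bSig$. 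So the entire task is to plug the formulas \eqref{s1s2s3} for $\bSig_2=\frac{2}{\pi}\arcsin(\bSig/2)$ and $\bSig_3=\frac{2}{\pi}\arcsin(\bSig)-\frac{4}{\pi}\arcsin(\bSig/2)$ into these two traces and read off the integral representation.

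First I would observe that for a Toeplitz correlation matrix $\bSig$ satisfying the absolute summability \eqref{ab-sum}, the entrywise transformed matrices $\arcsin(\bSig)$ and $\arcsin(\bSig/2)$ remain Toeplitz, because $\arcsin$ applied entrywise to a Toeplitz matrix preserves the Toeplitz structure (each diagonal band is transformed by the same scalar map). Hence $\bSig_2$, $\bSig_3$, and the linear combination $\bSig_3+2x\bSig_2-z\bI_p$ are all Toeplitz, with symbols obtainable by the same entrywise $\arcsin$ applied to the Fourier coefficients. A short computation of the symbol of $\bSig_3+2x\bSig_2-z\bI_p$ gives exactly $f_1(\theta)$ as defined in the statement, and the symbol of $\bSig_2$ is $f_2(\theta)$; one just bands-up the diagonal entries: the constant $\tfrac13$ on the main diagonal comes from $\arcsin$ evaluated at the diagonal of $\bSig$ (where $\rho_0=1$ yields the $\tfrac13$ terms via $\frac{2}{\pi}\arcsin(1/2)=\frac13$ for $\bSig_2$ and $\frac{2}{\pi}\arcsin(1)-\frac{4}{\pi}\arcsin(1/2)=1-\frac{2}{3}=\frac13$ for $\bSig_3$), and the off-diagonal Fourier coefficients assemble into the cosine series.

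Next I would handle the trace asymptotics. The difficulty is that Szeg\H{o}'s theorem, as stated, computes limits of traces of functions of a \emph{single} Toeplitz matrix, whereas \eqref{eq:Toep} involves the resolvent of one Toeplitz matrix ($\bSig_3+2x\bSig_2-z\bI_p$) multiplied by another ($\bSig_2$). The resolution, already flagged in the lead-in to Proposition~\ref{case3}, is to pass to circulant approximations via Lemmas~11 and Theorems~11--12 of \cite{gray2006toeplitz}: circulant matrices are simultaneously diagonalized by the Fourier basis regardless of their entries, so products and inverses of circulants are again circulant and their normalized traces converge to the integral of the corresponding ratio of symbols over $[0,2\pi]$. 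Thus $\frac{1}{p}\tr[(\bSig_3+2x\bSig_2-z\bI_p)^{-1}\bSig_2]\to\frac{1}{2\pi}\int_0^{2\pi}\frac{f_2(\theta)}{f_1(\theta)}d\theta$, which is \eqref{eq:Toep}, and similarly $\frac{1}{p}\tr[(\bSig_3+2x\bSig_2-z\bI_p)^{-1}]\to\frac{1}{2\pi}\int_0^{2\pi}\frac{1}{f_1(\theta)}d\theta$. The absolute summability plus the boundedness of $\arcsin$ guarantees $f_1,f_2$ are genuine continuous symbols and, since $z\in\mC^+$ pushes $f_1$ away from zero, the ratios are integrable.

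Finally I would substitute these two limits back into \eqref{eq:x} and \eqref{lsd}. The first limit, scaled by the factor $2$ and with $\frac{p}{n}\to c$ folded in, turns \eqref{eq:x} into $\frac{1}{x(z)}=1+\frac{c}{\pi}\int_0^{2\pi}\frac{f_2(\theta)}{f_1(\theta)}d\theta$ (the $\frac{2}{n}\tr=\frac{2}{n/p}\cdot\frac1p\tr\to 2c\cdot\frac{1}{2\pi}\int$ accounting for the $\frac{c}{\pi}$ prefactor), and the second limit yields $s(z)=\frac{1}{2\pi}\int_0^{2\pi}\frac{1}{f_1(\theta)}d\theta$, exactly as stated. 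The main obstacle is the rigorous justification of the circulant approximation for the \emph{mixed} trace $\tr[(\text{Toeplitz})^{-1}\cdot\text{Toeplitz}]$ uniformly in the spectral parameter $z$; once Gray's asymptotic equivalence of the Toeplitz and circulant families is invoked to control the difference in normalized traces (which vanishes as $p\to\infty$ under \eqref{ab-sum}), the remaining substitutions are algebraic.
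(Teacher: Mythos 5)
Your proposal is correct and follows essentially the same route as the paper: invoke Theorem~\ref{thm:thlsd} (justified for Gaussian data by Proposition~\ref{thmnorml}), compute the Toeplitz symbols of $\bSig_2$ and $\bSig_3+2x\bSig_2-z\bI_p$ from the entrywise $\arcsin$ formulas \eqref{s1s2s3}, handle the mixed trace $\tr[(\bSig_3+2x\bSig_2-z\bI_p)^{-1}\bSig_2]$ via Gray's circulant approximation (which is exactly Proposition~\ref{case3}), apply Szeg\H{o}'s theorem directly for the resolvent trace in \eqref{lsd}, and finish with the $\frac{2}{n}\tr \to \frac{c}{\pi}\int$ scaling. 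The only cosmetic difference is that you re-derive the content of Proposition~\ref{case3} rather than citing it, but the underlying argument is identical.
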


\section{Simulation}\label{sec:simu}
In this section, simulation experiments are conducted to examine the finite sample performance of eigenvalues of Kendall's sample correlation matrix when the data sample follows different dependence structure. We generate sample data $\bx_1,
\cdots, \bx_n\sim N_p(0,\bSig_0)$, draw the histogram of eigenvalues of the Kendall sample correlation matrix and compare with their theoretical densities. Specifically, we consider four types of covariance matrix $\bSig_0$:
\begin{itemize}
	\item[(I)] Independent case: $\bSig_0=\bSig=\bI_p$;
	\item[(II)]  Factor mode: $\bx_t=\A f_t+\varepsilon_t$, where $f_t\sim N_k(0,\bI_k)$, $\varepsilon_t\sim N_p(0,\bI_p)$, thus $\cov(\bx_t)=\bSig_0=\bI_p+\A \trans \A$, where $\mbox{rank}(\A)=k=3$;
	\item[(III)] MA(1) model: all the diagonal entries of $\bSig_0$ are 1, both upper and lower subdiagonal entries are $\rho$, others are zero;
	\item[(IV)] General Toeplitz matrix with $\rho_1=\rho_2=\rho,~\rho_k=0$ for $k\geq 3$.	
\end{itemize}
Here the population covariance matrix $\bSig_0$ and correlation matrix $\bSig$ are the same in Model (I), (III) and (IV).

\vspace{0.2cm}
\noindent
\textbf{Model (I): }\\
As for the independent case, we consider three types of sample correlation matrices, Pearson $\bR_n$, Spearman $\bS_n$ and Kendall $\K_n$. Specifically, for our data sample $\X_n=(\bx_1,\cdots,\bx_n)_{p\times n}$, $\bx_i=(x_{i1},\cdots,x_{ip})_{p\times1}$,  both $\bS_n=\lb
s_{k\ell}\rb$ and $\bR_n=\lb \rho_{k\ell}\rb$ are
$p\times p$ matrices where $s_{k\ell}$ and $\rho_{k\ell}$ are the Spearman and
Pearson correlation of the $k$-th and $\ell$-th row of $\X_n$ with
\begin{gather*}
	s_{k\ell}=\frac{\sum_{i=1}^n\lb r_{ki}-\overline{r}_k \rb\lb r_{\ell i}-\overline{r}_\ell \rb}{\sqrt{\sum_{i=1}^n\lb r_{ki}-\overline{r}_k\rb^2}\sqrt{\sum_{i=1}^n\lb r_{\ell i}-\overline{r}_\ell \rb^2}},\quad  \overline{r}_k=\frac{1}{n}\sum_{i=1}^n r_{ki}=\frac{n+1}{2},\\
	\rho_{k\ell}=\frac{\sum_{i=1}^n\lb x_{ki}-\overline{x}_k \rb\lb x_{\ell i}-\overline{x}_\ell \rb}{\sqrt{\sum_{i=1}^n\lb x_{ki}-\overline{x}_k\rb^2}\sqrt{\sum_{i=1}^n\lb x_{\ell i}-\overline{x}_\ell \rb^2}},\quad \overline{x}_k=\frac{1}{n}\sum_{i=1}^n x_{ki},
\end{gather*}
here $r_{ki}$
is the rank of $x_{ki}$ among $\lb x_{k1},\cdots, x_{kn}\rb$. From \cite{jiang2004limiting} and \cite{bai2008large}, we know that the LSD of $\bS_n$ and $\bR_n$ are both standard Mar\u{c}enko-Pastur law while the LSD of $\K_n$ is an affine transformation of Mar\u{c}enko-Pastur law \citep{bandeira2017marvcenko}. Thus we list the histogram of eigenvalues of all three types of sample correlation matrices  under different combinations of $(p,n)$ and compare with their corresponding limiting densities in Figure~\ref{fig:indep}. It can be seen from Figure~\ref{fig:indep} that all the histograms conform to their theoretical limits, which fully supports our theoretical results in the independent case.
\begin{figure}[!h]
\includegraphics[width=\linewidth]{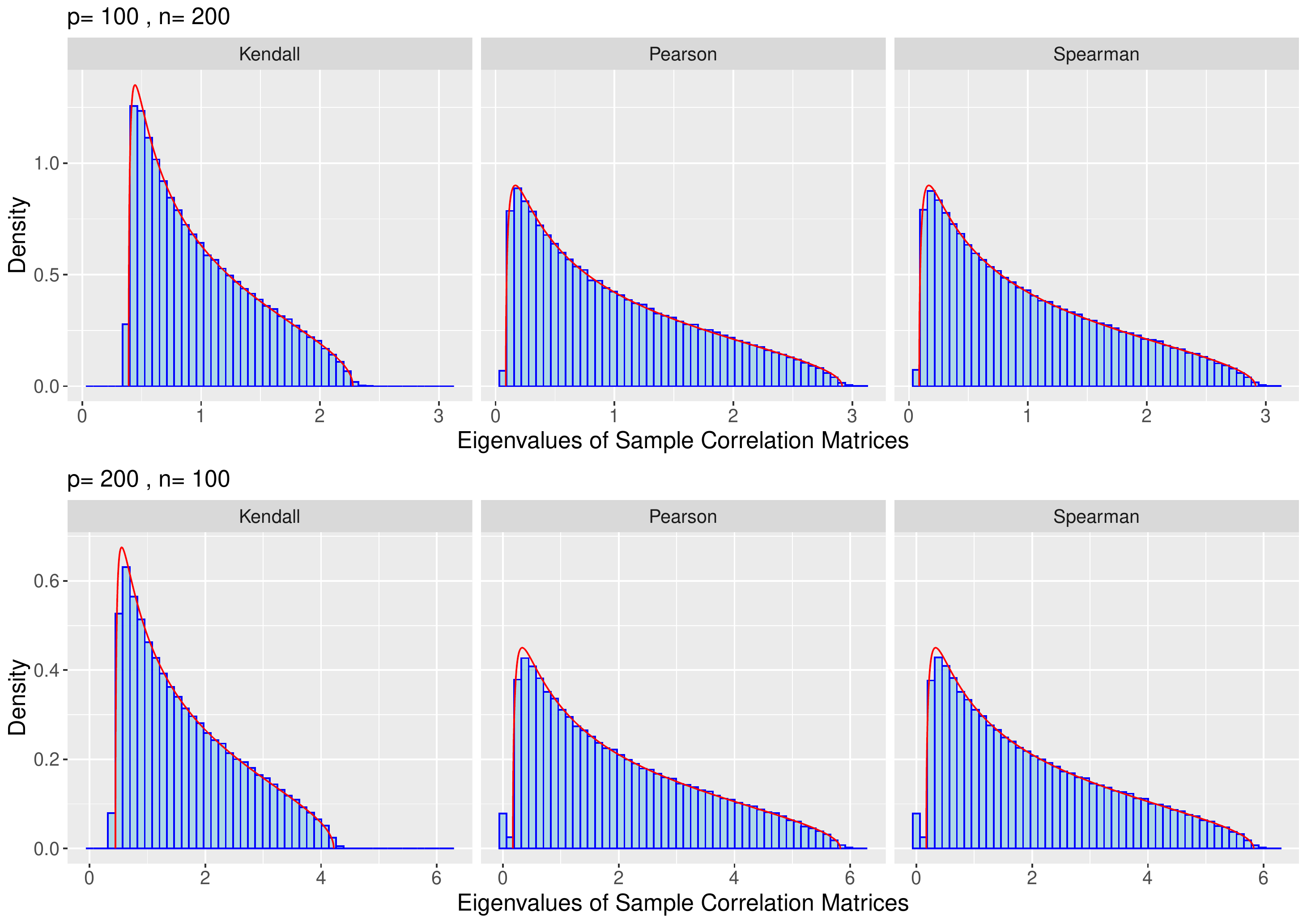}
	\caption{Histograms of eigenvalues of three types of sample correlation matrices, Kendall $\K_n$, Pearson $\bR_n$, Spearman $\bS_n$ for data sample $\bx_1,\cdots,\bx_n\sim N_p(0,\bI_p)$ with $(p,n)=(100, 200)$ and $(p,n)=(200, 100)$. The red curves are density functions of their corresponding limiting spectral distribution.}
	\label{fig:indep}
\end{figure}

\vspace{0.2cm}
\noindent\textbf{Model (II): }\\
The second case is the factor model or the spiked model, i.e., the population covariance matrix is
	\begin{align*}
		\cov(\bx_t)=\bSig_0=\mathbf{I}_p+\A \trans \A,
	\end{align*}
	where $\mbox{rank}(\A) \leq k$. For the related correlation matrix $\bSig$, we have 
	\begin{align*}
		\bSig=\di(\bSig_0)^{-1/2} \cdot \bSig_0 \cdot \di(\bSig_0)^{-1/2} =\di(\bSig_0)^{-1}+\tilde{\A} \trans \tilde{\A},
	\end{align*}
	where $\tilde{\A}= \A \di(\bSig_0)^{-1/2}$. Noting,
	\begin{align*}
		\bSig_1=&\frac{2}{\pi} \arcsin(\bSig),~\bSig_2=\frac{2}{\pi} \arcsin(\bSig/2), ~\bSig_3=\bSig_1-2\bSig_2,
	\end{align*}
	and $2 x/\pi \leq 2 \arcsin(x)/\pi \leq x$ for any $x \in [0,1]$, we have
	\begin{align*}
		\frac{1}{p} \left\|	\bSig_1-\mathbf{I}_p\right\|_2^2 \leq 	\frac{1}{p} \left\|\frac{2}{\pi} \arcsin(\tilde{\A} \trans \tilde{\A}) \right\|_2^2 \leq 	\frac{1}{p} \left\|\tilde{\A} \trans \tilde{\A}\right\|_2^2\leq	\frac{1}{p} \left\|\A \trans \A\right\|_2^2
	\end{align*}
	and 
	\begin{align*}
		\frac{1}{p} \left\|	\bSig_2-\frac{1}{3}\mathbf{I}_p\right\|_2^2 \leq 	\frac{1}{p} \left\|\frac{2}{\pi} \arcsin(\tilde{\A} \trans \tilde{\A}/2)\right\|_2^2 \leq \frac{1}{4p} \left\|\A \trans \A\right\|_2^2.
	\end{align*}
	Thus, when
	\begin{align*}
		\frac{1}{p} \left\|\A \trans \A\right\|_2^2 \to 0, 
	\end{align*}
	the LSD is still an affine transformation of Mar\u{c}enko-Pastur law \citep{bandeira2017marvcenko}.  If the term $\|\A \trans \A\|_2^2/p$ is large, the result violates the affine transformation of Mar\u{c}enko-Pastur law.  To demonstrate these results, we consider two covariance matrices:
	\begin{align*}
		\bSig_0=\mathbf{I}_p+\frac{1}{p}\Z \trans \Z,~\mbox{and}~	\bSig_0=\mathbf{I}_p+\frac{1}{\sqrt{p}}\Z \trans \Z,
	\end{align*}
where $\Z=(Z_{ij})_{k\times p}$ and $Z_{ij}~i.i.d. \sim N(0,1)$.  Figure~\ref{fig:factor} shows the results which are consistent with our analysis.
	\begin{figure}[!h]
\includegraphics[width=\linewidth]{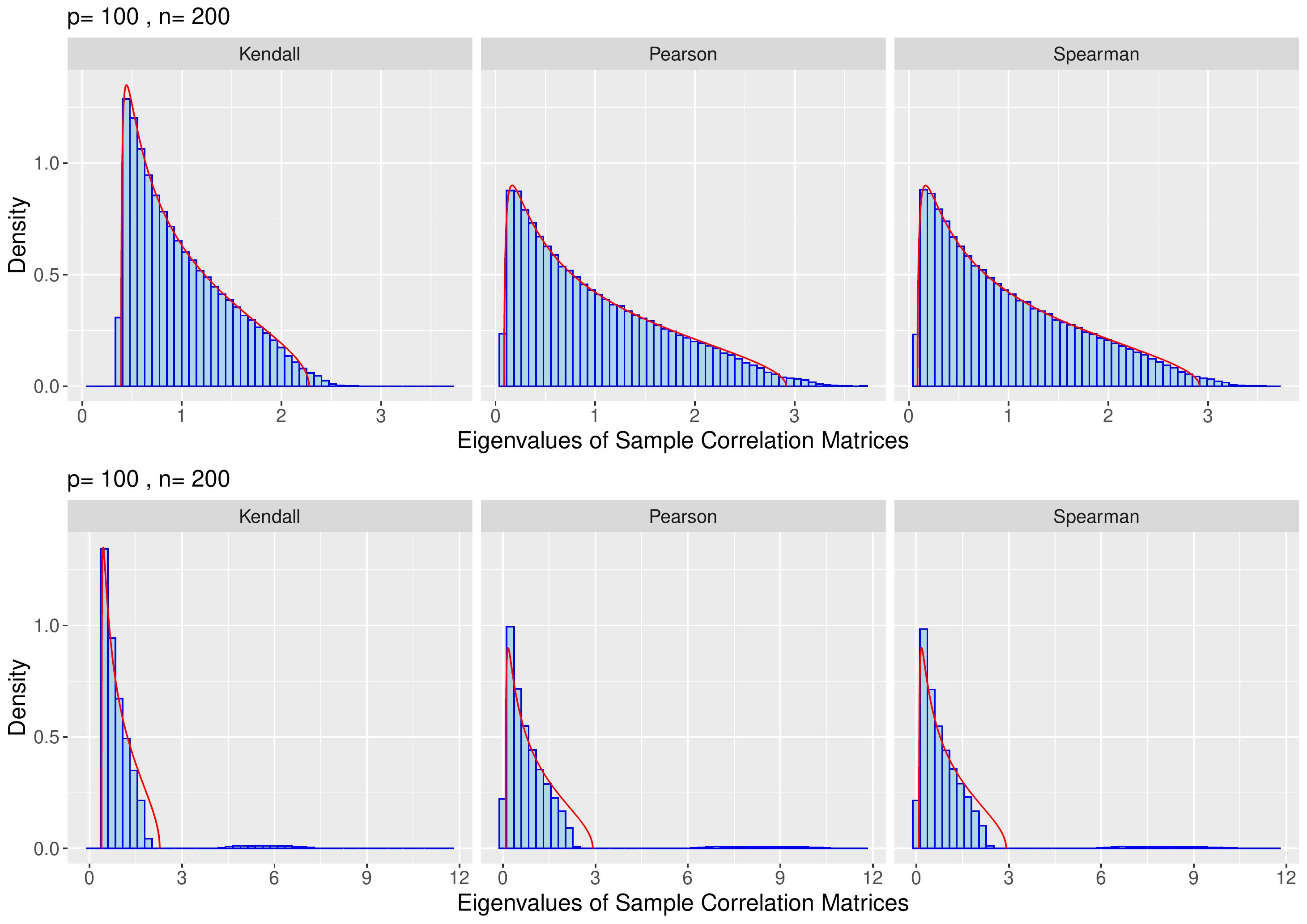}
		\caption{Histograms of eigenvalues of three types of sample correlation matrices, Kendall $\K_n$, Pearson $\bR_n$, Spearman $\bS_n$ for data sample $\bx_1,\cdots,\bx_n\sim N_p(0,\bI_p+\Z \trans \Z/p)$ in the upper panel and $\bx_1,\cdots,\bx_n\sim N_p(0,\bI_p+\Z \trans \Z/\sqrt{p})$ in the lower panel, $\Z=(Z_{ij})_{k\times p},~ Z_{ij}\sim N(0,1)$ i.i.d. with $k=3$, $(p,n)=(100, 200)$ and $(p,n)=(200, 100)$. The red curves are density functions of the affine transformation of Mar\u{c}enko-Pastur law.}
		\label{fig:factor}
	\end{figure}

\vspace{0.2cm}
\noindent\textbf{Model (III): }\\
As for MA(1) model, we focus on the spectral behavior of $\K_n$ since little is known about $\bS_n$ and $\bR_n$ in the dependent case. Similarly, we generate data sample $\bx_1,\ldots,\bx_n\sim N(0,\bSig)$ where $\bSig$ follows MA(1) model (III) with $\rho=0.5$. The LSD are derived using Proposition~\ref{case2} and the inversion formula~\eqref{eq:inv}. Then the histogram of eigenvalues of $\K_n$ under different combinations of $(p,n)$ are compared with their corresponding limiting densities in Figure~ \ref{fig:ma}. It can be seen from Figure~\ref{fig:ma} that the LSDs under MA(1) model are different from the independent case. All empirical histograms conform to our theoretical limits, which proves the accuracy of our theory.
\begin{figure}[!h]
\includegraphics[width=\linewidth]{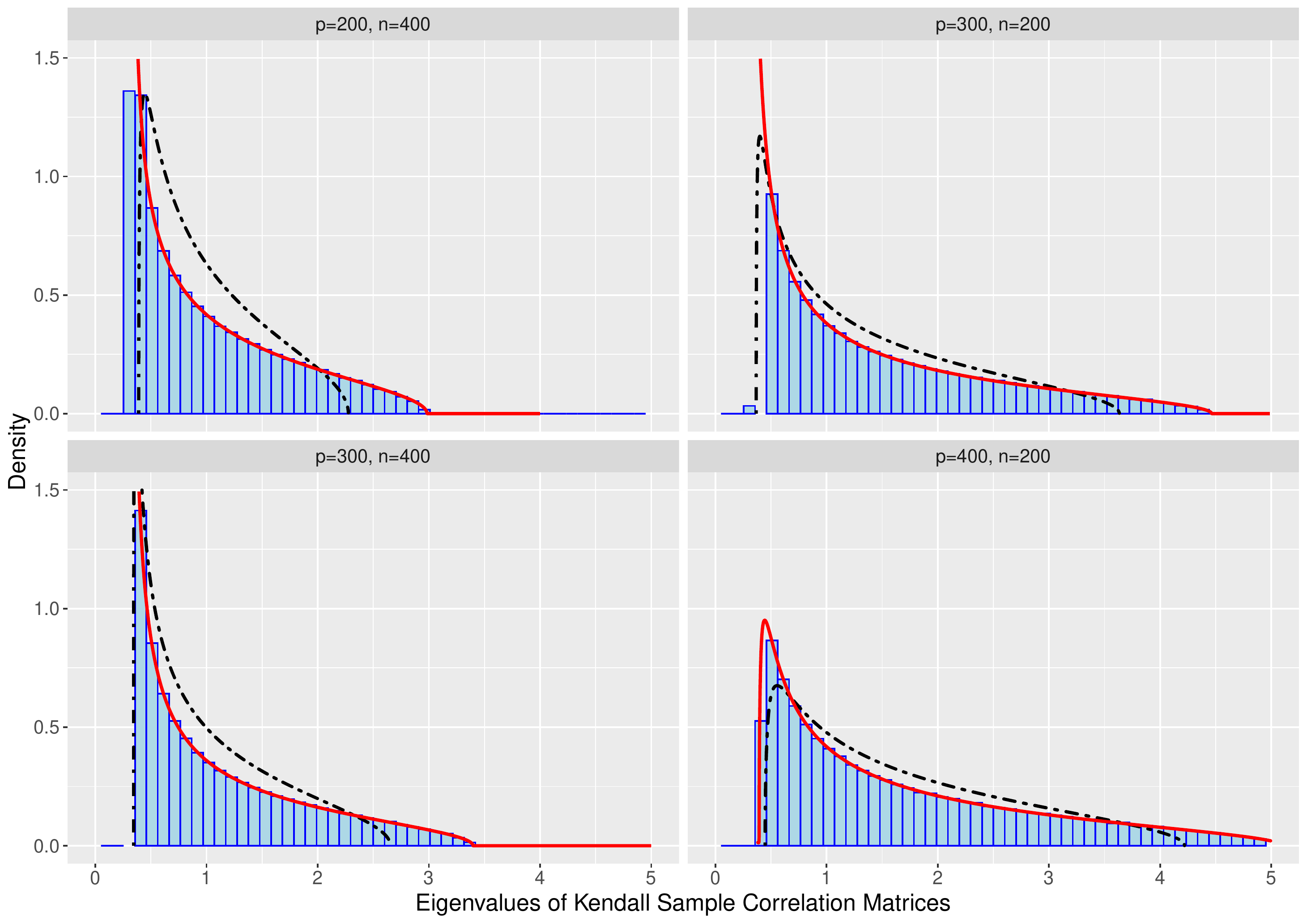}
	\caption{Histograms of eigenvalues of Kendall sample correlation matrices for data sample $\bx_1,\cdots,\bx_n\sim N_p(0,\bSig)$ where $\bSig$ follows MA(1) Model (II) with $\rho=0.5$ for $(p,n)=(200,400)$, $(p,n)=(300,400)$, $(p,n)=(300,200)$ and $(p,n)=(400,200)$. The red curves are density functions of their corresponding limiting spectral distribution. The black dashed line are densities in the independent case for reference.}
	\label{fig:ma}
\end{figure}

\vspace{0.2cm}
\noindent\textbf{Model (IV): }\\
As an illustration for the general Toeplitz matrix, we consider a band Toeplitz matrix with two parameters, i.e.,
		\begin{align*}
			\rho_1=\rho_2=\rho,~\rho_k=0, k=3,\cdots.
		\end{align*}
		Noting
		\begin{align*}
			\bSig_2=\frac{2}{\pi} \arcsin(\bSig/2), ~\bSig_3=& \frac{2}{\pi} \arcsin(\bSig)-\frac{4}{\pi} \arcsin(\bSig/2),
		\end{align*}	
		we have
		\begin{align*}
			\bSig_3-\frac{1}{3} \bI_p=a(	\bSig_2-\frac{1}{3} \bI_p),~a=\frac{\arcsin(\rho)}{ \arcsin(\rho/2)}-2
		\end{align*}
		and then 
		\begin{align*}
			\bSig_3=a \bSig_2+\frac{1-a}{3} \bI_p.
		\end{align*}
		The assumption \eqref{ass2} is
		\begin{align*}
			\lim_{p \to \infty}\frac{1}{p}\tr \big[((2x+a) \bSig_2-(z-\frac{1-a}{3})\bI_p)^{-1}\bSig_2 \big]=\frac{1-x}{2cx}.
		\end{align*}
and Proposition \ref{case3} yields
\begin{align}\label{eqr1}
	\frac{1-x}{2cx}=\frac{1}{2\pi} \int_0^{2\pi} \frac{f_2(\theta)}{(2x+a)f_2(\theta)-(z-\frac{1-a}{3})} d\theta
\end{align}
where 
	\begin{align*}
			f_2(\theta)=&\frac{1}{3}+\frac{4}{\pi}  \arcsin\frac{\rho}{2} \big[\cos{(\theta)}+\cos{(2\theta)}\big].	
	\end{align*}
Solving \eqref{eqr1} to get $x(z)$, the Stieltjes transform of the LSD \eqref{lsd} is
		\begin{align*}
			s(z)=&\lim_{p \to \infty}\frac{1}{p}\tr \left((2x+a) \bSig_2-(z-\frac{1-a}{3})\bI_p\right)^{-1}\\
			=&-\frac{1}{z_1}\lim_{p \to \infty}\frac{1}{p}\tr \left((2x+a) \bSig_2-z_1\bI_p\right)^{-1} \left((2x+a) \bSig_2-z_1\bI_p-(2x+a)\bSig_2 \right) \\
			=&-\frac{1}{z_1} \left(1-\frac{(2x+a)(1-x)}{2cx}\right)=\frac{(2x+a)(1-x)-2cx}{2cz_1x},
		\end{align*}
where 
\begin{align*}
	z_1=z-\frac{1-a}{3}. 
\end{align*}
Figure~\ref{fig:gen} shows the results with $\rho=0.25$ and again, we can see  that  the empirical histogram conforms to our theoretical result.
\begin{figure}[!h]
\includegraphics[width=\linewidth]{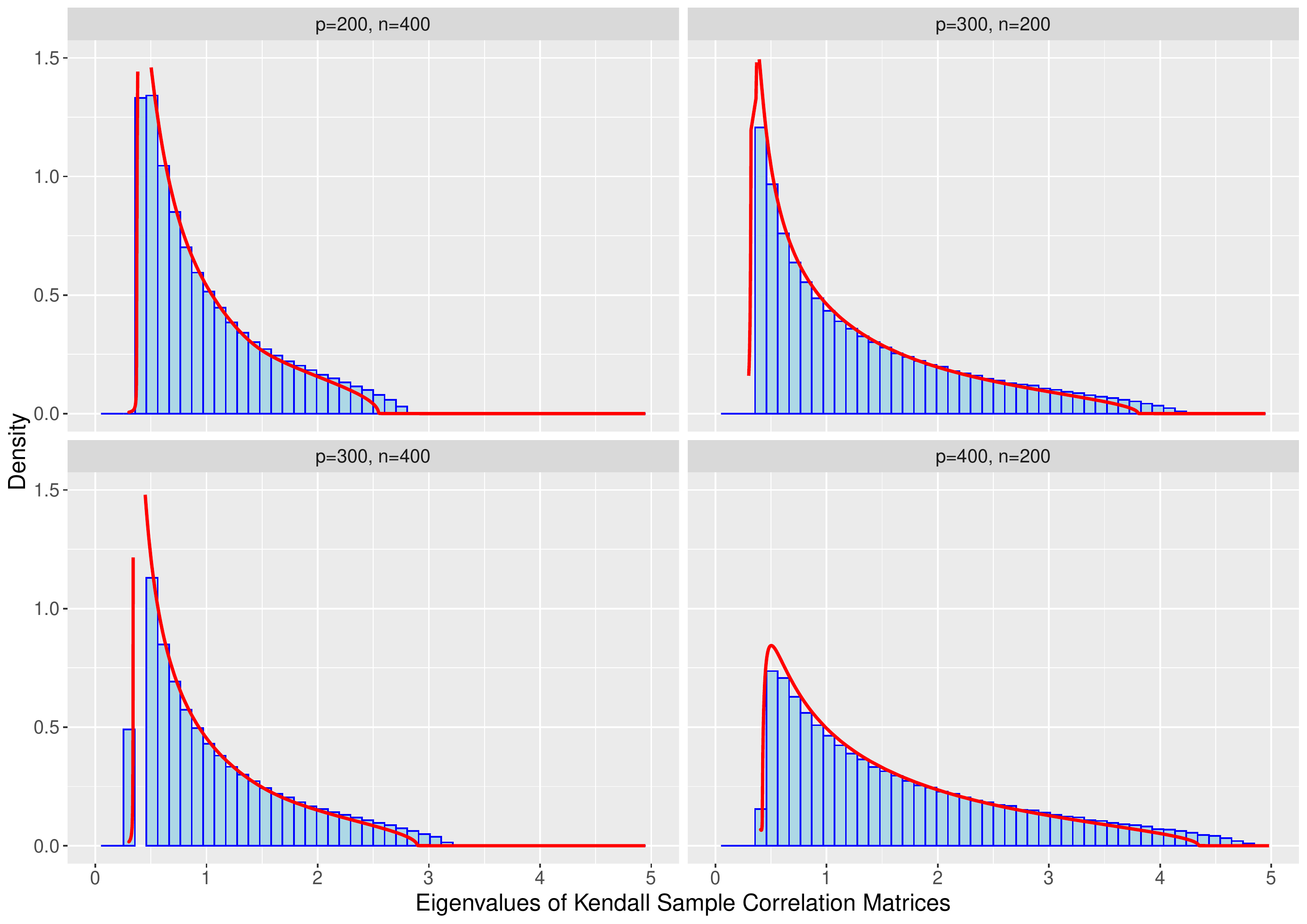}
	\caption{Histograms of eigenvalues of Kendall sample correlation matrices for data sample $\bx_1,\cdots,\bx_n\sim N_p(0,\bSig)$ where $\bSig$ follows general Toeplitz Model (IV) with $\rho=0.25$ for $(p,n)=(200,400)$, $(p,n)=(300,400)$, $(p,n)=(300,200)$ and $(p,n)=(400,200)$. The red curves are density functions of their corresponding limiting spectral distribution.}
	\label{fig:gen}
\end{figure}

\section*{Acknowledgments}
We thank the Editor, an Associate Editor, and anonymous reviewers for their insightful comments. Zeng Li’s research is partially supported by National Natural Science Foundation of China (NSFC) (No. 12031005 and No. 12101292). Cheng Wang's research is supported by NSFC (No. 12031005) and NSF of Shanghai (21ZR1432900). Qinwen Wang's research is partially supported by the NSFC (No. 12171099).

\section*{Appendix}

\setcounter{section}{0}
\setcounter{equation}{0}
\def\theequation{A.\arabic{equation}}
\def\thesection{A\arabic{section}}
This Appendix contains all supporting lemmas and proofs.

\section{Proof of Proposition~\ref{thm1}}
The following results show that $\M_2$ and $\M_3$ are concentrated on their population means, respectively.
	\begin{lemma} \label{lem:error1}
	Under the assumption of Proposition \ref{thm1}, 
	\begin{align*}
		\frac{1}{p}\E\|\M_2\|_2^2\leq  \frac{4 p^2}{3np(n-1)}+\frac{{8}}{np} \tr(\bSig^2_2) \to 0.
	\end{align*}	
\end{lemma}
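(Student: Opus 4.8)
The plan is to bound $\E\|\M_2\|_2^2$ by expanding the squared Frobenius norm into a quadruple sum over the sample indices and then using the degeneracy of the Hoeffding residual $\epsilon_{ij}$ to annihilate all but a handful of terms. First I would symmetrize. Since $\epsilon_{ji}=-\epsilon_{ij}$ and $\A_i-\A_j$ is antisymmetric, relabelling ordered pairs collapses the difference and yields the cleaner representation
\[
\M_2=\frac{2}{n(n-1)}\sum_{i\neq j}\A_i\epsilon_{ij}\trans=\frac{2}{n(n-1)}\sum_{i=1}^n\A_i\eta_i\trans,\qquad \eta_i:=\sum_{j\neq i}\epsilon_{ij},
\]
so that
\[
\E\|\M_2\|_2^2=\frac{4}{n^2(n-1)^2}\sum_{i\neq j,\,k\neq l}\E\big[(\A_i\trans\A_k)(\epsilon_{ij}\trans\epsilon_{kl})\big].
\]

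The structural input is the U-statistic degeneracy $\E(\epsilon_{ij}\mid\bx_i)=\E(\epsilon_{ij}\mid\bx_j)=\ovec$. Conditioning on all coordinates except $\bx_j$ (respectively $\bx_l$) shows that any term in which the inner index $j$ (respectively $l$) appears only once among $\{i,j,k,l\}$ vanishes. A short case analysis then leaves exactly three index patterns: the diagonal $(i,j,i,j)$, the conjugate diagonal $(i,j,j,i)$, and the single-shared-index family $(i,j,k,j)$ with $i,j,k$ distinct. By exchangeability this reduces the sum to
\[
\E\|\M_2\|_2^2=\frac{4}{n^2(n-1)^2}\Big[n(n-1)\E\big(\|\A_1\|^2\|\epsilon_{12}\|^2\big)-n(n-1)\E\big((\A_1\trans\A_2)\|\epsilon_{12}\|^2\big)+n(n-1)(n-2)\E\big((\A_1\trans\A_3)(\epsilon_{12}\trans\epsilon_{32})\big)\Big].
\]

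For the diagonal term I would use the deterministic bound $\|\A_1\|^2\le p$ (each coordinate of $\A_1$ lies in $[-1,1]$) together with $\E\|\epsilon_{12}\|^2=\tr(\bSig_3)=p/3$, the latter following from $(\bSig_1)_{aa}=1$ and $(\bSig_2)_{aa}=1/3$ for continuous marginals; this reproduces exactly the first summand $\tfrac{4p^2}{3np(n-1)}$ of the claimed bound after dividing by $p$. The two partial-overlap families must instead be controlled by $\tr(\bSig_2^2)$. Writing the single-shared-index summand through the centered conditional cross-moment matrix $G(\bx_2):=\E(\A_1\epsilon_{12}\trans\mid\bx_2)$, conditional independence of $\bx_1$ and $\bx_3$ given $\bx_2$ gives $\E\big((\A_1\trans\A_3)(\epsilon_{12}\trans\epsilon_{32})\big)=\E\|G(\bx_2)\|_2^2$, and the target estimate is $\E\|G(\bx_2)\|_2^2\le 2\tr(\bSig_2^2)$ (here the useful identity is $\tr(\bSig_2^2)=\E(\A_1\trans\A_1')^2$ for an independent copy $\A_1'$ of $\A_1$). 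The conjugate-diagonal term either has a favorable sign or is of order $\tr(\bSig_2^2)/n^2$ and is absorbed. Since the single-shared-index prefactor is $\tfrac{4(n-2)}{n(n-1)}\le \tfrac4n$, this yields the second summand $\tfrac{8}{np}\tr(\bSig_2^2)$; the whole bound then tends to $0$ because $p/n\to c$ and, under $\|\bSig_1\|\le C$, one has $2\bSig_2\preceq\bSig_1$, hence $\tr(\bSig_2^2)\le p\|\bSig_2\|^2\le pC^2/4$.

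The main obstacle is the estimate $\E\|G(\bx_2)\|_2^2\le 2\tr(\bSig_2^2)$. A crude conditional Jensen bound only gives $O(p^2)$, which is useless, and ordinary Cauchy–Schwarz also loses the needed factor, so one cannot avoid quantifying the cancellation built into $G$ being a centered matrix whose entries are (differences of) regression functions of the coordinates of $\A_1$; making this genuinely $O(\tr(\bSig_2^2))$ rather than $O(p^2)$ is the delicate step and is precisely where the uniform boundedness $\|\bSig_1\|\le C$ (equivalently of $\bSig_2$) is used.
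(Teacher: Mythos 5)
Your combinatorial setup is correct and in fact mirrors the paper's own argument: the paper writes $\M_2$ as a U-statistic with the symmetric kernel $\h(i,j)=(\A_i-\A_j)\epsilon_{ij}\trans$ over unordered pairs, kills the disjoint-index terms via $\E\,\h(1,2)=\mathbf{0}$, bounds the diagonal family deterministically, and reduces the shared-index family to the expected squared Frobenius norm of a conditional-expectation matrix --- exactly your $\E\|G(\bx_2)\|_2^2$, since your $G$ coincides (up to relabeling and sign) with the paper's $\E[\h(1,2)\mid \bx_1]=-\E[\A_2\A_{12}\trans\mid\bx_1]-\bSig_2$. Your ordered-pair symmetrization, the identification of the three surviving index patterns, and the diagonal bound via $\|\A_1\|_2^2\le p$ and $\E\|\epsilon_{12}\|_2^2=\tr(\bSig_3)=p/3$ are all fine.

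The genuine gap is the step you yourself flag: the estimate $\E\|G(\bx_2)\|_2^2\le 2\tr(\bSig_2^2)$ is asserted but never proved, and it carries the entire weight of the bound for the dominant $O(n^3)$ family of terms, so the argument is incomplete precisely at its crux. (The side claim that the conjugate-diagonal term ``has a favorable sign or is $O(\tr(\bSig_2^2)/n^2)$'' is likewise unjustified, though that term is easy to handle.) What you should notice is that the strong estimate is not needed, and the paper does not prove it either: the Cauchy--Schwarz route you dismiss as ``losing the needed factor'' is exactly what the paper uses. Since $\|\epsilon_{12}\|_2\le\sqrt{p}$ and $\E|\A_1\trans\A_3|\le\left\{\E(\A_1\trans\A_3)^2\right\}^{1/2}=\left\{\tr(\bSig_2^2)\right\}^{1/2}\le\sqrt{p}\,\|\bSig_2\|$, one gets $\E\|G(\bx_2)\|_2^2=\E\big[(\A_1\trans\A_3)(\epsilon_{12}\trans\epsilon_{32})\big]\le p^{3/2}\|\bSig_2\|\le Cp^{3/2}$, so the shared-index family contributes at most $\frac{4(n-2)}{n(n-1)p}\cdot Cp^{3/2}=O(\sqrt{p}/n)\to 0$. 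Indeed, the paper's own proof does not derive the literal two-term display in the lemma statement; it derives the weaker bound $\frac{1}{p}\E\|\M_2\|_2^2\le \frac{8p}{n(n-1)}+\frac{4C\sqrt{p}}{n}\to 0$, and this convergence to zero is all that is used downstream (in Proposition \ref{thm1}, through the Levy-distance inequality). By insisting on reproducing the exact displayed constants you manufactured an obstacle whose truth in the stated generality is not even clear, whereas the ``lossy'' Cauchy--Schwarz bound closes the proof.
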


\begin{lemma} \label{lem:error2}
	Under the assumption of Proposition\ref{thm1}, 
	\begin{align*}
		\frac{1}{p}	\E\|\M_3-\bSig_3\|_2^2\leq\frac{2 p^2}{3np(n-1)}+\frac{32}{np} \tr\{\bSig_1(\bSig_1+\bSig_2)\} \to 0.
	\end{align*}	
\end{lemma}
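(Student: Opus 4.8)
The plan is to treat $\M_3$ as a matrix-valued $U$-statistic of order two with the symmetric kernel $\epsilon_{12}\epsilon_{12}\trans$ (symmetric because $\epsilon_{ji}=-\epsilon_{ij}$) and mean $\E\M_3=\bSig_3$. Writing $\M_3-\bSig_3=\frac{2}{n(n-1)}\sum_{i<j}D_{ij}$ with $D_{ij}=\epsilon_{ij}\epsilon_{ij}\trans-\bSig_3$ and expanding the squared Frobenius norm, $\E\|\M_3-\bSig_3\|_2^2=\frac{4}{n^2(n-1)^2}\sum_{i<j}\sum_{k<l}\E\tr(D_{ij}D_{kl})$. Since $\E D_{ij}=\ovec$ and the $\bx_i$ are independent, every term with $\{i,j\}\cap\{k,l\}=\emptyset$ vanishes, leaving only coincident pairs ($\{i,j\}=\{k,l\}$) and pairs sharing exactly one index. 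Counting these gives the Hoeffding-type decomposition $\E\|\M_3-\bSig_3\|_2^2=\frac{2}{n(n-1)}\E\|D_{12}\|_2^2+\frac{4(n-2)}{n(n-1)}\E\tr(D_{12}D_{13})$, reducing the problem to two moment estimates.

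For the coincident term I would bound $\E\|D_{12}\|_2^2\le\E\|\epsilon_{12}\epsilon_{12}\trans\|_2^2=\E\|\epsilon_{12}\|^4$. The key observation is that each coordinate has the representation $(\epsilon_{12})_k=\sign(x_{1k}-x_{2k})-2F_k(x_{1k})+2F_k(x_{2k})$, where $F_k$ is the $k$-th marginal; setting $V=F_k(x_{1k})-F_k(x_{2k})$ this equals $\sign(V)-2V\in[-1,1]$ and satisfies $\E(\epsilon_{12})_k^2=1-4\E|V|+4\E V^2=\tfrac13$. Hence $(\epsilon_{12})_k^4\le(\epsilon_{12})_k^2$, and by coordinatewise Cauchy--Schwarz $\E\|\epsilon_{12}\|^4=\sum_{k,l}\E[(\epsilon_{12})_k^2(\epsilon_{12})_l^2]\le p^2/3$, producing exactly the first term $\tfrac{2p^2}{3n(n-1)}$.

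For the one-shared-index term I would use $\E\tr(D_{12}D_{13})=\E[(\epsilon_{12}\trans\epsilon_{13})^2]-\tr(\bSig_3^2)\le\E[(\epsilon_{12}\trans\epsilon_{13})^2]$ and exploit the defining orthogonality $\E\{\epsilon_{12}\mid\bx_1\}=\ovec$. Splitting $\epsilon_{12}=(\A_{12}-\A_1)+\A_2$ and $\epsilon_{13}=(\A_{13}-\A_1)+\A_3$ and expanding the inner product into four pieces, the three pieces carrying at least one of the independent factors $\A_2,\A_3$ are controlled through $\cov(\A_2)=\bSig_2$ and $\E[(\A_{13}-\A_1)(\A_{13}-\A_1)\trans]=\bSig_1-\bSig_2$, giving contributions bounded by $\tr(\bSig_2^2)$ and $\tr\{\bSig_2(\bSig_1-\bSig_2)\}$. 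Combined with the leading part $\tr\{(\bSig_1-\bSig_2)^2\}$ of the fourth piece and the PSD orderings $\bSig_2\preceq\tfrac12\bSig_1$ and $\bSig_3\preceq\bSig_1$ (from $\bSig_1=2\bSig_2+\bSig_3$ with $\bSig_2,\bSig_3\succeq0$), these assemble, after absorbing numerical constants, into the advertised bound $8\,\tr\{\bSig_1(\bSig_1+\bSig_2)\}$.

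The step I expect to be the genuine obstacle is the fourth piece, the interaction--interaction term $\E[((\A_{12}-\A_1)\trans(\A_{13}-\A_1))^2]$. Conditioning on $\bx_1$ renders $\A_{12}-\A_1$ and $\A_{13}-\A_1$ conditionally i.i.d. and centered, so this equals $\E\,\tr[\cov(\A_{12}\mid\bx_1)^2]$; its mean-matrix part is $\tr\{(\bSig_1-\bSig_2)^2\}\le\tr(\bSig_1^2)=O(p)$ under $\|\bSig_1\|\le C$, but its conditional-covariance fluctuation is precisely governed by $\var(\A_{12}\trans\A_{13})$, since $\E[(\A_{12}\trans\A_{13})^2]=\E\,\tr[\E\{\A_{12}\A_{12}\trans\mid\bx_1\}^2]$. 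This is exactly where the hypothesis $p^{-2}\var(\A_{12}\trans\A_{13})\to0$ enters: it prevents the sign-induced nonlinear cross-component dependence from inflating the conditional second moment beyond $o(p^2)$. Finally, dividing by $p$ and using $p/n\to c$: the first term is $\tfrac{2p}{3n(n-1)}=O(1/n)\to0$; $\|\bSig_1\|\le C$ forces $\tr(\bSig_1^2),\tr(\bSig_1\bSig_2)=O(p)$ so the second term is $O(1/n)\to0$; and the interaction fluctuation contributes $o(p^2)/(np)=o(1)\to0$, giving the claim.
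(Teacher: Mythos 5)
Your proposal is correct in outline and follows essentially the same strategy as the paper's proof: the same second-moment expansion of the $U$-statistic into coincident, one-overlap and (vanishing) disjoint terms, and the same four-way splitting $\epsilon_{12}=(\A_{12}-\A_1)+\A_2$, $\epsilon_{13}=(\A_{13}-\A_1)+\A_3$ of the overlap term, with the three pieces involving $\A_2$ or $\A_3$ evaluated exactly as in the paper via $\cov(\A_2)=\bSig_2$ and $\E\{(\A_{13}-\A_1)(\A_{13}-\A_1)\trans\}=\bSig_1-\bSig_2$. One place where you are actually sharper than the paper: for the coincident term you use $(\epsilon_{12})_k=\sign(V)-2V$, $\E(\epsilon_{12})_k^2=1/3$, and coordinatewise Cauchy--Schwarz to get $\E\|\epsilon_{12}\|^4\le p^2/3$, which recovers the constant $\frac{2p^2}{3np(n-1)}$ appearing in the lemma statement; the paper's own proof only uses $\|\epsilon_{12}\|_2^2\le p$ and obtains $\frac{2p^2}{np(n-1)}$.

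The real difference is in the interaction--interaction piece, and there your sketch stops one identity short of a proof. The paper telescopes with two orthogonality cancellations: $\E\{(\A_{12}-\A_1)\trans(\A_{13}-\A_1)\}^2=\E\{(\A_{12}-\A_1)\trans\A_{13}\}^2-\E\{(\A_{12}-\A_1)\trans\A_1\}^2\le \var(\A_{12}\trans\A_{13})-\var(\A_{12}\trans\A_1)\le\var(\A_{12}\trans\A_{13})$. Your formulation $\E\tr\{\mathbf{C}_1^2\}$ with $\mathbf{C}_1=\cov(\A_{12}\mid\bx_1)$ is equivalent, but the link to $\var(\A_{12}\trans\A_{13})$ that you assert needs an exact cancellation: the naive bound $\mathbf{C}_1\preceq \mathbf{G}_1\defby\E\{\A_{12}\A_{12}\trans\mid\bx_1\}$ gives only $\E\tr\{\mathbf{G}_1^2\}=\var(\A_{12}\trans\A_{13})+\{\tr(\bSig_2)\}^2$, and the extra term $\{\tr(\bSig_2)\}^2\asymp p^2/9$ does \emph{not} vanish after dividing by $np$ (it contributes order $p/n\to c/9$), so that route fails. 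To close it, expand $\mathbf{C}_1=\mathbf{G}_1-\A_1\A_1\trans$ and use $\E\{\A_1\trans \mathbf{G}_1\A_1\}=\E(\A_1\trans\A_{12})^2$ together with $\E\|\A_1\|^4=\var(\|\A_1\|^2)+\{\tr(\bSig_2)\}^2$: the three copies of $\{\tr(\bSig_2)\}^2$ cancel exactly, leaving $\E\tr\{\mathbf{C}_1^2\}=\var(\A_{12}\trans\A_{13})-2\var(\A_1\trans\A_{12})+\var(\|\A_1\|^2)$; since $\|\A_1\|^2=\E\{\A_{12}\trans\A_{13}\mid\bx_1\}$, the law of total variance gives $\var(\|\A_1\|^2)\le\var(\A_{12}\trans\A_{13})$, hence $\E\tr\{\mathbf{C}_1^2\}\le 2\var(\A_{12}\trans\A_{13})$, which suffices (up to a factor $2$ absorbed into constants). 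So your route does work, but this cancellation must be carried out; the paper's telescoping identities sidestep it entirely and yield the cleaner bound $\var(\A_{12}\trans\A_{13})$.
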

Equipped with these two results, we are now ready to prove Proposition \ref{thm1}. By the Corollary A.41 of \cite{bai2010spectral}, 
		\begin{align*}
			L^3(F^{\K_n}, F^{\W_n}) \leq \frac{1}{p} \|\K_n-\W_n\|_2^2 \leq \frac{6}{p} \|\M_2\|_2^2+\frac{3}{p} \|\M_3-\bSig_3\|_2^2
		\end{align*}
which yields 
\begin{align*}
\E 	L^3(F^{\K_n}, F^{\W_n})  \to 0.
\end{align*}
The proof  is completed.  \hfill		
\bigskip

It remains to prove the two auxiliary Lemmas \ref{lem:error1} and \ref{lem:error2}.
	
\subsection{Proof of Lemma~\ref{lem:error1}}
Writing the kernel function
	\begin{align*}
		\h(i,j)=(\A_i-\A_j)(\A_{ij}-\A_i+\A_j)\trans,
	\end{align*}
we have 
	\begin{align*}
		\M_2&=\frac{2}{n(n-1)} \sum_{1\leq i<j\leq n}(\A_i-\A_j)(\A_{ij}-\A_i+\A_j)\trans\\
		&=\frac{2}{n(n-1)} \sum_{1\leq i<j\leq n}\h(i,j).	
	\end{align*}
	For the kernel function, we have the following properties.
	\begin{itemize}
		\item For the mean parts, 
		\begin{align*}
			\h(i,j)=\h(j,i),\quad \E \h(1,2)=\mathbf{0},\quad \E \left[\h(1,2)|\bx_1\right]=-\E \left[\A_2 \A_{12}\trans |\bx_1\right]-\bSig_2.
		\end{align*}
		\item For the Frobenius norm of the kernel function, we have
		\begin{align*}
			\E\tr(\h(1,2) \trans \h(1,2))=\E \left(\|\A_{12}-\A_{1}+\A_{2}\|_2^2  \|\A_1-\A_2\|_2^2 \right).
		\end{align*}
		Since
		\begin{align*}
			\|\A_{12}-\A_{1}+\A_{2} \|_\infty=&\|\sign(\A_1-\A_2)-(\A_1-\A_2)\|_\infty 	\leq 1,
		\end{align*}
and $\|\A_i\|_\infty \leq 1$, we have
		\begin{align*}
\|\A_{12}-\A_{1}+\A_{2}\|_2^2\leq p, \mbox{and} \quad  \|\A_1-\A_2\|_2^2 \leq 4p,
		\end{align*}
	which yields 
	\begin{align*}
			\E\tr(\h(1,2) \trans \h(1,2)) \leq 4 p^2.
	\end{align*}
		\item For the the Frobenius norm of the conditional mean, we have
		\begin{align*}
			\E\tr(\h(1,2) \trans \h(1,3))=& \tr\{ \E (\A_2 \A_{12}\trans+\bSig_2)\trans (\A_3 \A_{13}\trans+\bSig_2)  \}\\
			=&	\cov(\A_{13}\trans \A_{12}, \A_2 \trans \A_3)- \tr(\bSig^2_2)\\
	\leq & \E |\A_{13}\trans \A_{12}| |\A_2 \trans \A_3|\leq p \cdot \{\var(\A_{3}\trans \A_{2}) \}^{1/2}\\
=& p \cdot \{\ \tr(\bSig^2_2) \}^{1/2} \leq p^{3/2} \cdot \|\bSig_2\| \leq p^{3/2} \cdot \|\bSig_1\|. 
\end{align*}	
	which yields 
\begin{align*}
	\E\tr(\h(1,2) \trans \h(1,3)) \leq C  p^{3/2}.
\end{align*}
	\end{itemize}
	Putting together the pieces, we conclude that
	\begin{align*}
	\frac{1}{p}	\E \left( \M_2 \trans  \M_2 \right)=	\frac{1}{p}	 &\E \bigg\{\frac{2}{n(n-1)} \sum_{1\leq i<j\leq n} \h(i,j) \bigg\}\trans \bigg\{\frac{2}{n(n-1)} \sum_{1\leq k<l\leq n} \h(k,l) \bigg\}\\
		=&\frac{2}{np(n-1)}	\E\tr(\h(1,2) \trans \h(1,2))+\frac{4p(n-2)}{n(n-1)} 	\E\tr(\h(1,2) \trans \h(1,3))\\
		\leq & \frac{8 p^2}{np(n-1)}+\frac{4C p^{3/2}}{np}\to 0.
	\end{align*}	
	The proof  is completed.  \hfill		
\bigskip

\subsection{Proof of Lemma~\ref{lem:error2}}
	Recalling 
	\begin{align*}
		\M_3=\frac{2}{n(n-1)} \sum_{1\leq i<j\leq n}\epsilon_{ij} \epsilon_{ij}\trans,
	\end{align*}
	we have
	\begin{align*}
		&\E \tr\{(\M_3-\bSig_3)(\M_3-\bSig_3)\trans\}\\
		=&~ \frac{4}{n^2(n-1)^2} \sum_{i<j,k<l}	\E\{\epsilon_{kl} \trans (\epsilon_{ij} \epsilon_{ij}\trans-\bSig_3) 	\epsilon_{kl}\}\\
		=&~\frac{2}{n(n-1)} \E\{\epsilon_{12} \trans (\epsilon_{12} \epsilon_{12}\trans-\bSig_3) 	\epsilon_{12}\}+\frac{4(n-2)}{{n(n-1)}} \E\{\epsilon_{12} \trans (\epsilon_{13} \epsilon_{13}\trans-\bSig_3) \epsilon_{12}\}\\
		=&~\frac{2}{n(n-1)} \{\E (\epsilon_{12} \trans \epsilon_{12})^2-\tr(\bSig^2_3)\} +\frac{4(n-2)}{{n(n-1)}} \{\E (\epsilon_{12} \trans \epsilon_{13})^2-\tr(\bSig^2_3)\}\\
		\leq &~ \frac{2}{n(n-1)} \E (\epsilon_{12} \trans \epsilon_{12})^2 +\frac{4}{{n}} \E (\epsilon_{12} \trans \epsilon_{13})^2.
	\end{align*}
	For the first term, we have
	\begin{align*}
		\epsilon_{12} \trans \epsilon_{12}=	\|\A_{12}-\A_{1}+\A_{2}\|_2^2\leq  p  \|\A_{12}-\A_{1}+\A_{2} \|_\infty^2\leq p,
	\end{align*}
	which yields $\E (\epsilon_{12} \trans \epsilon_{12})^2\leq p^2$.
	For the second term, we have
	\begin{align*}
& (\epsilon_{12} \trans \epsilon_{13})^2
		=\{(\A_{12}-\A_{1}+\A_{2})\trans(\A_{13}-\A_{1}+\A_{3})  \}^2\\
		=&~\{(\A_{12}-\A_1)\trans(\A_{13}-\A_1)+\A_2\trans(\A_{13}-\A_1)+\A_3\trans (\A_{12}-\A_1)+\A_2\trans \A_3\}^2\\
		\leq&~4\{(\A_{12}-\A_1)\trans(\A_{13}-\A_1)\}^2+4\{\A_2\trans(\A_{13}-\A_1)\}^2+4\{\A_3\trans (\A_{12}-\A_1)\}^2+4(\A_2\trans \A_3)^2,
	\end{align*}
	and 
	\begin{align*}
\frac{1}{4}	\E (\epsilon_{12} \trans \epsilon_{13})^2 \leq & \E  \{(\A_{12}-\A_1)\trans(\A_{13}-\A_1)\}^2+\E \{\A_2\trans(\A_{13}-\A_1)\}^2\\
&+\E \{\A_3\trans (\A_{12}-\A_1)\}^2+\E (\A_2\trans \A_3)^2\\
=& \E  \{(\A_{12}-\A_1)\trans \A_{13}\}^2- \E  \{(\A_{12}-\A_1)\trans \A_{1}\}^2+2 \tr \{\bSig_2(\bSig_1-\bSig_2) \}+\tr\{\bSig_2^2 \}\\
\leq & \E  \{(\A_{12}-\A_1)\trans \A_{13}\}^2+ 2 \tr\{\bSig_1 \bSig_2 \}=\var(\A_{12}\trans \A_{13})-\var(\A_{12}\trans \A_{1})+2 \tr\{\bSig_1 \bSig_2 \}\\
\leq&  \var(\A_{12}\trans \A_{13})+2 p C^2.
	\end{align*}
Finally, we have
	\begin{align*}
	\frac{1}{p}	\E \tr\{(\M_3-\bSig_3)(\M_3-\bSig_3)\trans\}\leq \frac{2 p^2}{np(n-1)}+\frac{16}{np} \var(\A_{12}\trans \A_{13})+\frac{32}{np} p C^2 \to 0.
	\end{align*}
	The proof  is completed.  \hfill		

\section{Proof of Theorem \ref{thm:thlsd} }\label{sec:mainproof}
According to Proposition~\ref{thm1}, it suffices to study the LSD of the following matrix $\M_n$.
\begin{align}
	\M_n=\frac{2}{n}\sum_{i=1}^n \A_i \A_i \trans+\bSig_3.
\end{align}
Let $s_{n}(z)$ be the Stieltjes transform of $F^{\M_n}$, then the convergence of $F^{\M_n}$ can be determined in three steps:
\begin{itemize}
	\item[] {\em Step 1}: For any fixed $z\in \mathbb C^+$, $s_{n}(z)-\E s_{n}(z)\to0$, almost surely.
	\item[] {\em Step 2}: For any fixed $z\in \mathbb C^+$, $\E s_{n}(z)\to s(z)$ with $s(z)$ satisfies the equations in \eqref{lsd}.
	
	\item[] {\em Step 3}: The uniqueness of the solution $s(z)$ to \eqref{lsd} on the set $\mathbb C^+$.
\end{itemize}
\subsection{
	Almost sure convergence of $s_{n}(z)-\E s_{n}(z)$}
\medskip

Denote 
\begin{align*}
	\M_{n,k}=\frac{2}{n}\sum_{i\neq k}^n \A_i \A_i \trans+\bSig_3.
\end{align*}
Let $\E_0(\cdot)$ be expectation and $\E_k(\cdot)$ be conditional expectation given $\A_{1},\ldots, \A_k$. From the martingale decomposition and the identity
\begin{align}\label{equ0}
	(\M_n-z\bI_p)^{-1}\A_k=\frac{(\M_{n,k}-z\bI_p)^{-1}\A_k}{1+ 2 n^{-1} \A_k\trans(\M_{n, k}-z\bI_p)^{-1}\A_k},	
\end{align}
we have
\begin{align*}
	s_{n}(z)-\E s_{n}(z)=&\frac{1}{p}\sum_{k=1}^n(\E_k-\E_{k-1})\left[\tr(\M_n-z\bI_p)^{-1}-\tr(\M_{n, k}-z\bI_p)^{-1}\right]\\
	=&-\frac{1}{p}\sum_{k=1}^n(\E_k-\E_{k-1})\frac{2n^{-1}  \A_k\trans(\M_{n, k}-z\bI_p)^{-2}\A_k}{1+ 2n^{-1} \A_k\trans(\M_{n, k}-z\bI_p)^{-1}\A_k}\\
	\defby &-\frac{1}{p}\sum_{k=1}^n(\E_k-\E_{k-1})r_k.
\end{align*}
Since $$|r_k|\leq \frac{\left|2n^{-1}  \A_k\trans(\M_{n, k}-z\bI_p)^{-2}\A_k\right|}{\left|\Im(1+ 2n^{-1} \A_k\trans(\M_{n, k}-z\bI_p)^{-1}\A_k)\right|}\leq \frac{1}{\Im(z)},$$
$\{(\E_k-\E_{k-1})r_k\}$ forms a bounded martingale difference sequence.
Hence for any $\ell>1$,
\begin{align}\label{sne}
	\E|s_{n}(z)-\E s_{n}(z)|^\ell&\leq Kp^{-\ell}\E\bigg(\sum_{k=1}^n\big|(\E_k-\E_{k-1})r_k\big|^2\bigg)^{\ell/2}\nonumber\\
	&\leq Kp^{-\frac{\ell}{2}}\Im(z)^{-\ell},
\end{align}
which implies  $s_{n}(z)-\E s_{n}(z)\to0$, almost surely.

\medskip

\subsection{Convergence of $\E s_{n}(z)$}

Denote 
\begin{align}\label{xn}
	x_n=x_n(z)=\frac 1n \sum_{k=1}^n \frac{1}{1+2n^{-1}\E \left[\tr \left\{(\M_{n,k}-z\bI_p)^{-1}\bSig_2\right\}\right]}.
\end{align}

Starting from the identity 
\begin{align*}
	&\quad \left(2x_n\bSig_2+\bSig_3-z\bI_p\right)^{-1}-(\M_n-z\bI_p)^{-1}\\
	&=\left(2x_n\bSig_2+\bSig_3-z\bI_p\right)^{-1}\bigg(\frac{2}{n}\sum_{k=1}^n \A_k \A_k -2x_n\bSig_2\bigg)(\M_n-z\bI_p)^{-1},
\end{align*}
then take trace on both sides and by \eqref{equ0}, we have 
\begin{align}\label{dof}
	&~\quad\frac 1p \tr(2x_n\bSig_2+\bSig_3-z\bI_p)^{-1}-s_n(z)\nonumber\\
	&=\frac{2}{np}\sum_{k=1}^n \frac{\A_k\trans(\M_{n, k}-z\bI_p)^{-1}(2x_n\bSig_2+\bSig_3-z\bI_p)^{-1}\A_k}{1+ 2n^{-1} \A_k\trans(\M_{n, k}-z\bI_p)^{-1}\A_k}\\
	&\quad-\frac{2x_n}{p}\tr \left\{(2x_n\bSig_2+\bSig_3-z\bI_p)^{-1}\bSig_2(\M_n-z\bI_p)^{-1}\right\}\nonumber\\
	&\defby\frac 2n \sum_{k=1}^n \frac{d_k}{1+2n^{-1}\E \left[\tr \left\{(\M_{n,k}-z\bI_p)^{-1}\bSig_2\right\}\right]},
\end{align}
where
\begin{align*}
	d_k&=\frac{1+2n^{-1}\E \left[\tr \left\{(\M_{n,k}-z\bI_p)^{-1}\bSig_2\right\}\right]}{1+2n^{-1} \A_k\trans(\M_{n,k}-z\bI_p)^{-1}\A_k}\cdot \frac 1p \A_k\trans(\M_{n,k}-z\bI_p)^{-1}(2x_n\bSig_2+\bSig_3-z\bI_p)^{-1}\A_k\\
	&\quad \quad-\frac 1p  \tr \left\{(\M_{n}-z\bI_p)^{-1}(2x_n\bSig_2+\bSig_3-z\bI_p)^{-1}\bSig_2\right\}.
\end{align*}
We decompose $$d_k\defby d_{k1}+d_{k2}+d_{k3},$$
where 
\begin{align*}
	d_{k1}&=\frac 1p  \tr \left\{(\M_{n, k}-z\bI_p)^{-1}(2x_n\bSig_2+\bSig_3-z\bI_p)^{-1}\bSig_2\right\}\\
	&\quad-\frac 1p  \tr \left\{(\M_{n}-z\bI_p)^{-1}(2x_n\bSig_2+\bSig_3-z\bI_p)^{-1}\bSig_2\right\},\\
	d_{k2}&=\frac 1p  \A_k\trans(\M_{n,k}-z\bI_p)^{-1}(2x_n\bSig_2+\bSig_3-z\bI_p)^{-1}\A_k\\
	&\quad -\frac 1p  \tr \left\{(\M_{n, k}-z\bI_p)^{-1}(2x_n\bSig_2+\bSig_3-z\bI_p)^{-1}\bSig_2\right\},\\
	d_{k3}&=\frac{-2 \A_k\trans(\M_{n,k}-z\bI_p)^{-1}(2x_n\bSig_2+\bSig_3-z\bI_p)^{-1}\A_k}{pn\left\{1+2n^{-1} \A_k\trans(\M_{n,k}-z\bI_p)^{-1}\A_k\right\}}\\
	&\quad \times \bigg\{\A_k\trans(\M_{n,k}-z\bI_p)^{-1}\A_k-\E \big[ \tr\left\{(\M_{n,k}-z\bI_p)^{-1}\bSig_2\right\}\big]\bigg\}.
\end{align*}

For the term $d_{k1}$, we have
\begin{align}\label{dk1}
	|d_{k1}|&=\left|\frac {2}{pn}  \tr \left\{(\M_{n}-z\bI_p)^{-1} \A_k \A_k\trans (\M_{n, k}-z\bI_p)^{-1}   (2x_n\bSig_2+\bSig_3-z\bI_p)^{-1}\bSig_2\right\}\right|\nonumber\\
	&=\left|\frac{2n^{-1}p^{-1}\A_k\trans (\M_{n, k}-z\bI_p)^{-1}   (2x_n\bSig_2+\bSig_3-z\bI_p)^{-1}\bSig_2 (\M_{n, k}-z\bI_p)^{-1} \A_k}{1+2n^{-1}\A_k\trans (\M_{n, k}-z\bI_p)^{-1}\A_k}\right|\nonumber\\
	&\leq Kp^{-1}\Im(z)^{-2},
\end{align}
then its contribution to \eqref{dof} can be bounded as
\begin{align*}
	\left|\frac 2n \sum_{k=1}^n \frac{d_{k1}}{1+2n^{-1}\E  \big[\tr\left\{(\M_{n,k}-z\bI_p)^{-1}\bSig_2\right\}\big]}\right|
	&\leq  \frac 2n \sum_{k=1}^n \frac{|d_{k1}|}{\Big|1+2n^{-1}\E  \big[\tr\left\{(\M_{n,k}-z\bI_p)^{-1}\bSig_2\right\}\big]\Big|}\\
	&\leq Kp^{-1}\Im(z)^{-2}\to 0.
\end{align*}
For the term $d_{k2}$, we have 
\begin{align}\label{dk2}
	\E( d_{k2})=0.
\end{align}
For the term $d_{k3}$, we have the first part bounded by
\begin{align*}
	\left|\frac{-2 \A_k\trans(\M_{n,k}-z\bI_p)^{-1}(2x_n\bSig_2+\bSig_3-z\bI_p)^{-1}\A_k}{pn\left\{1+2n^{-1} \A_k\trans(\M_{n,k}-z\bI_p)^{-1}\A_k\right\}}\right|\leq Kp^{-1}n^{-1}\Im(z)^{-2}||\A_k||^2.
\end{align*}
So we have 
\begin{align*}
	|\E (d_{k3})|^2\leq K p^{-2}n^{-2}\E ||\A_k||^4\E \Big|\A_k\trans(\M_{n,k}-z\bI_p)^{-1}\A_k-\E \big[\tr\left\{ (\M_{n,k}-z\bI_p)^{-1}\bSig_2\right\}\big]\Big|^2.
\end{align*}
According to Lemma \ref{lem2-var} and  Lemma \ref{bound},
\begin{align*}
	&\E ||\A_k||^4=\var (\A_k\trans \A_k)+\{\E (\A_k\trans \A_k)\}^2\leq \tr (\bSig_1^2)-\tr (\bSig_2^2)+\{\tr (\bSig_2)\}^2=O(p^2),\\
	&\E \Big|\A_k\trans(\M_{n,k}-z\bI_p)^{-1}\A_k-\E\big[ \tr\left\{ (\M_{n,k}-z\bI_p)^{-1}\bSig_2\right\}\big]\Big|^2\\
	&\quad \leq 
	3||\bSig||\cdot \E\big[\tr\left\{ (\M_{n,k}-z\bI_p)^{-2}\bSig_2 \right\}\big]=O(p),
\end{align*}
which  gives 
\begin{align}\label{dk3}
	|\E (d_{k3})|^2\leq K p^{-1}.
\end{align}
Combining \eqref{dof}, \eqref{dk1}, \eqref{dk2} and \eqref{dk3}, we have 
\begin{align}\label{eqt1}
	\E s_n(z)\to \frac 1p \tr(2x\bSig_2+\bSig_3-z\bI_p)^{-1}\defby s(z),
\end{align}
where $x$ is the limit of $x_n$.

Next, we give the equation that $x$ satisfies.
Starting from the quantity $$n^{-1}\E\big[ \tr\left\{ (\M_{n,k}-z\bI_p)^{-1}\bSig_2\right\}\big]$$ at the denominator in \eqref{xn}. From similar arguments as in \eqref{dof}, we can replace the term $(\M_{n,k}-z\bI_p)^{-1}$ by that of $(2x_n\bSig_2+\bSig_3-z\bI_p)^{-1}$, and this leads to the equation
\begin{align}\label{eqt2}
	x=\frac{1}{1+2\lim_{n \to \infty} n^{-1}\tr (2x\bSig_2+\bSig_3-z\bI_p)^{-1}\bSig_2}.
\end{align}
\eqref{eqt1} and \eqref{eqt2} are exactly the  equations  \eqref{lsd} and  \eqref{eq:x} established in  Theorem \ref{thm:thlsd}.

\subsection{The uniqueness of the solution $s(z)$}
We only have to show that the solution $x(z)$ to \eqref{eq:x}, if exists, is unique in $\mathbb{C}^-$.
Now suppose we have two solutions $x_1=x_1(z), x_2=x_2(z) \in \mathbb{C}^-$   to \eqref{eq:x}  for a common $z \in \mathbb{C}^{+}$, then we can obtain 
\begin{align*}
	\frac{1}{x_1}-\frac{1}{x_2}=\lim_{n\to \infty}\frac{2}{n}\tr \big[(\bSig_3+2x_1 \bSig_2-z\bI_p)^{-1}(2x_2\bSig_2-2x_1\bSig_2)(\bSig_3+2x_2 \bSig_2-z\bI_p)^{-1}  \bSig_2\big].
\end{align*}
If $x_1\neq x_2$, then 
\begin{align*}
	1&=\lim_{n\to \infty}\frac{4}{n}\tr \big[x_1\bSig^{1/2}_2(\bSig_3+2x_1 \bSig_2-z\bI_p)^{-1}\bSig^{1/2}_2\cdot x_2\bSig^{1/2}_2(\bSig_3+2x_2 \bSig_2-z\bI_p)^{-1}  \bSig^{1/2}_2\big]\\
	&=\lim_{n\to \infty}\frac{4}{n}\tr \Big[x_1\Big(\bSig^{-1/2}_2(\bSig_3-z\bI_p)\bSig^{-1/2}_2+2x_1\bI_p\Big)^{-1}\cdot x_2\Big(\bSig^{-1/2}_2(\bSig_3-z\bI_p)\bSig^{-1/2}_2+2x_2\bI_p\Big)^{-1}\Big]\\
	&\defby \lim_{n\to \infty}\frac{4}{n}\tr \Big[x_1\big(\Q(z)+2x_1\bI_p\big)^{-1}\cdot x_2\big(\Q(z)+2x_2\bI_p\big)^{-1}\Big],
\end{align*}
where $\Q(z)$ is defined as $$\Q(z)=\bSig^{-1/2}_2(\bSig_3-z\bI_p)\bSig^{-1/2}_2.$$
By the Cauchy-Schwarz inequality, we have
\begin{align}\label{m1}
	1&\leq \bigg\{\lim_{n\to \infty}\frac{4|x_1|^2}{n}\tr\Big[\big(\Q(z)+2x_1\bI_p\big)\big(\Q(\bar{z})+2\bar{x}_1\bI_p\big)\Big]^{-1}\nonumber\\
	&\quad\quad\times  \lim_{n\to \infty} \frac{4|x_2|^2}{n}\tr\Big[\big(\Q(z)+2x_2\bI_p\big)\big(\Q(\bar{z})+2\bar{x}_2\bI_p\big)\Big]^{-1}\bigg\}^{1/2}.
\end{align}
On the other hand,	denote the eigen-decomposition of $\Q(z)$ by
\begin{align*}
	\Q(z)=\sum_{k=1}^p \lambda_k \bv_k \overline{\bv}_k \trans.
\end{align*} 
Then, we have
\begin{align*}
	\lambda_k
	= \overline{\bv}_k \trans \bSig^{-1/2}_2(\bSig_3-z \bI_p) \bSig^{-1/2}_2 \bv_k
	=  \overline{\bv}_k \trans \bSig^{-1/2}_2 \bSig_3 \bSig^{-1/2}_2 \bv_k- z  \overline{\bv}_k \trans \bSig^{-1}_2 \bv_k,
\end{align*}
which yields 
\begin{align*}
	\Im(\lambda_k)=- \Im(z)  \overline{\bv}_k \trans \bSig^{-1}_2 \bv_k<0.
\end{align*}	
Then taking the imaginary part in \eqref{eq:x}, we have
\begin{align*}
	\frac {\Im(\bar x)}{|x|^2}&=\lim_{n\to \infty}\frac{2}{n}\Im\Big(\tr \big(\Q(z)+2x\bI_p\big)^{-1}\Big)\\
	&=\lim_{n\to \infty}\frac{2}{n}\sum_{k=1}^p\frac{\Im(\bar\lambda_k)+2\Im(\bar x)}{|\lambda_k+2x|^2}\\
	&>\lim_{n\to \infty}\frac{4}{n}\sum_{k=1}^p\frac{\Im(\bar x)}{|\lambda_k+2x|^2}.
\end{align*}
Since $\Im(\bar x)>0$, the above inequality yields that		
\begin{align*}
	\frac {1}{|x|^2}>\lim_{n\to \infty}\frac{4}{n}\sum_{k=1}^p\frac{1}{|\lambda_k+2x|^2}=\lim_{n\to \infty}\frac{4}{n}\tr \big(\Q(z)+2x\bI_p\big)^{-1}\big(\Q(\bar z)+2\bar x\bI_p\big)^{-1},
\end{align*}	
which leads to a contradiction with \eqref{m1}. This contradiction proves that $x_1=x_2$ and hence equation  \eqref{eq:x} has at most one solution in $\mathbb{C}^-$.  The proof of this theorem is then complete. \hfill		

\section{Proof of Proposition \ref{thmnorml}}
The proof is checking the assumptions \ref{ass2} for normal distribution which are summarized in Lemmas \ref{lem2-var} and \ref{bound}.  Before proceeding, we need the following variance result for Kendall's correlation.
\begin{lemma}[\citealt{esscher1924method}] \label{clalem2}
	Consider a multivariate normal distribution 
	\begin{align*}
		\begin{pmatrix}
			z_1\\
			z_2\\
			z_3\\
			z_4
		\end{pmatrix} \sim N \left(\begin{pmatrix}
			0\\
			0\\
			0\\
			0
		\end{pmatrix},   \begin{pmatrix}
			1~ & 1/2~&\rho~&\rho/2~\\
			1/2~&1~&\rho/2~&\rho~\\
			\rho~&\rho/2~&1~&1/2~\\
			\rho/2~&\rho~&1/2~&1~
		\end{pmatrix} \right)
	\end{align*}
	where $\rho \in (-1,1)$,  we have 
	\begin{align} \label{case0}
		\E \left\{\prod_{j=1}^4 \sign(z_j)\right\}=\left(\frac{2}{\pi} \arcsin{\rho}\right)^2-\left\{\frac{2}{\pi} \arcsin{(\rho/2)}\right\}^2+\frac{1}{9}.
	\end{align}
\end{lemma}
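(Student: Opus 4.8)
The plan is to regard the left-hand side as a function $F(\rho)$ of the single parameter $\rho$ and to establish the identity by differentiating in $\rho$ and matching an initial value. The starting point is the symmetry of the correlation matrix: writing $r_{ij}$ for its entries, one has $r_{12}=r_{34}=1/2$, $r_{13}=r_{24}=\rho$ and $r_{14}=r_{23}=\rho/2$, so the law of $(z_1,z_2,z_3,z_4)$ is invariant under swapping $z_1\leftrightarrow z_2$ together with $z_3\leftrightarrow z_4$, and under interchanging the pairs $\{z_1,z_2\}\leftrightarrow\{z_3,z_4\}$. At $\rho=0$ the matrix becomes block-diagonal, so $(z_1,z_2)$ and $(z_3,z_4)$ are independent; combining this with Lemma~\ref{clalem1} and the fact that $\frac{2}{\pi}\arcsin(1/2)=1/3$ yields the initial value $F(0)=(1/3)^2=1/9$, which agrees with the right-hand side at $\rho=0$.

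Next I would invoke the Gaussian differentiation (Price) formula: for a jointly normal vector with correlation entries $r_{ij}$ and a test function $g$, one has $\partial\,\E[g]/\partial r_{ij}=\E[\partial^2 g/\partial z_i\partial z_j]$ for $i\ne j$. Taking $g=\prod_k\sign(z_k)$ and using $\partial\sign(z)/\partial z=2\delta(z)$, the mixed second derivative is $4\,\delta(z_i)\delta(z_j)\prod_{k\ne i,j}\sign(z_k)$. Since only $r_{13}=r_{24}=\rho$ and $r_{14}=r_{23}=\rho/2$ depend on $\rho$, the chain rule gives
\begin{align*}
\frac{dF}{d\rho}
&=4\big(\E[\delta(z_1)\delta(z_3)\sign(z_2)\sign(z_4)]+\E[\delta(z_2)\delta(z_4)\sign(z_1)\sign(z_3)]\big)\\
&\quad+2\big(\E[\delta(z_1)\delta(z_4)\sign(z_2)\sign(z_3)]+\E[\delta(z_2)\delta(z_3)\sign(z_1)\sign(z_4)]\big).
\end{align*}
The symmetry $z_1\leftrightarrow z_2,\ z_3\leftrightarrow z_4$ identifies the two summands inside each bracket, so this reduces to
\begin{align*}
\frac{dF}{d\rho}=8\,\E[\delta(z_1)\delta(z_3)\sign(z_2)\sign(z_4)]+4\,\E[\delta(z_1)\delta(z_4)\sign(z_2)\sign(z_3)].
\end{align*}

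To evaluate each term I would integrate out the two delta functions: $\E[\delta(z_i)\delta(z_j)h(z)]$ equals the value at the origin of the joint density of $(z_i,z_j)$ times $\E[h\mid z_i=z_j=0]$. The standard bivariate normal density with correlation $r$ takes the value $1/(2\pi\sqrt{1-r^2})$ at the origin, and by Lemma~\ref{clalem1} the conditional expectation of the remaining product of signs equals $\frac{2}{\pi}\arcsin$ of the conditional correlation of that pair, which I compute by the Schur-complement formula for Gaussian conditioning. For the first term, conditioning on $z_1=z_3=0$ (whose correlation is $\rho$) leaves $(z_2,z_4)$ with conditional correlation exactly $\rho$; for the second term, conditioning on $z_1=z_4=0$ (correlation $\rho/2$) leaves $(z_2,z_3)$ with conditional correlation $-\rho/2$. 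Substituting these gives
\begin{align*}
\frac{dF}{d\rho}=\frac{8}{\pi^2}\,\frac{\arcsin\rho}{\sqrt{1-\rho^2}}-\frac{4}{\pi^2}\,\frac{\arcsin(\rho/2)}{\sqrt{1-\rho^2/4}},
\end{align*}
which is precisely $\frac{d}{d\rho}\big[(\tfrac{2}{\pi}\arcsin\rho)^2-(\tfrac{2}{\pi}\arcsin(\rho/2))^2\big]$. Integrating from $0$ to $\rho$ and adding $F(0)=1/9$ then delivers \eqref{case0}.

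I expect the main obstacle to be the conditional-correlation computations, and in particular the second one: the Schur complement $\Sigma_{23}-C\Sigma_{14}^{-1}C^{\trans}$ produces unwieldy rational expressions in $\rho$ whose ratio must be shown to collapse to the clean value $-\rho/2$ (the first case simplifies more readily because the relevant cross-covariance block is a scalar multiple of the conditioning block). A secondary technical point is the rigorous justification of the delta-function manipulations; this I would handle by replacing $\sign$ with a smooth approximation, applying Price's formula to the smoothed integrand, and passing to the limit, the resulting expressions being exactly the density-times-conditional-expectation objects used above.
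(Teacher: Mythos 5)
Your proposal is correct, and it takes a genuinely different route from the paper for the simple reason that the paper offers no proof at all: Lemma~\ref{clalem2} is quoted from the classical literature \citep{esscher1924method}, where this fourth-order sign moment arises in computing the variance of Kendall's rank correlation for bivariate normal samples. Your argument is thus a self-contained replacement for that citation, and its steps check out. The correlation matrix factors as the Kronecker product of $\bigl(\begin{smallmatrix}1&\rho\\ \rho&1\end{smallmatrix}\bigr)$ with $\bigl(\begin{smallmatrix}1&1/2\\ 1/2&1\end{smallmatrix}\bigr)$, so it is positive definite for $|\rho|<1$ and invariant under the two permutations you use; the initial value $F(0)=1/9$ follows from block-diagonality and Lemma~\ref{clalem1}; Price's theorem with chain-rule weights $1,1,\tfrac12,\tfrac12$ gives exactly your expression for $dF/d\rho$. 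The two conditioning computations are right: for $z_1=z_3=0$ the cross-covariance block equals $\tfrac12\Sigma_{(z_1,z_3)}$, so the Schur complement is $\tfrac34\bigl(\begin{smallmatrix}1&\rho\\ \rho&1\end{smallmatrix}\bigr)$ and the conditional correlation of $(z_2,z_4)$ is $\rho$; for $z_1=z_4=0$ the Schur complement works out to $\frac{3(1-\rho^2)}{4(1-\rho^2/4)}\bigl(\begin{smallmatrix}1&-\rho/2\\ -\rho/2&1\end{smallmatrix}\bigr)$, giving conditional correlation $-\rho/2$, whose sign is what produces the minus in front of the $\arcsin(\rho/2)$ term. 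The resulting
\begin{align*}
\frac{dF}{d\rho}=\frac{8}{\pi^2}\,\frac{\arcsin\rho}{\sqrt{1-\rho^2}}-\frac{4}{\pi^2}\,\frac{\arcsin(\rho/2)}{\sqrt{1-\rho^2/4}}
\end{align*}
is indeed the exact derivative of the right-hand side of \eqref{case0}, and integration from $0$ plus $F(0)=1/9$ closes the proof; the mollification of $\sign$ that you mention is the standard and adequate justification of the delta-function calculus. What your approach buys over the paper's citation: it is verifiable in situ, uses only ingredients already present in the paper (Grothendieck's identity and Gaussian conditioning) plus Price's theorem, and the same differentiate-and-match scheme extends to fourth-order sign moments with other correlation patterns, whereas Esscher's original derivation is tied to the specific variance computation he needed.
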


\begin{lemma}\label{lem2-var}
	Assuming $\bx_1,\cdots, \bx_n, i.i.d. \sim N(\bf{0},\bSig)$, we have 
	\begin{align}
		\var( \A_{12} \trans \A_{13} )=\tr(\bSig^2_1)-\tr(\bSig^2_2).
	\end{align}
\end{lemma}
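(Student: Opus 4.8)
The plan is to expand $\A_{12}\trans\A_{13}$ coordinatewise and reduce both its mean and its second moment to the four-dimensional sign integral supplied by Lemma~\ref{clalem2}. Writing the $k$-th coordinate of $\A_{12}$ as $\sign(x_{1k}-x_{2k})$, I have
\begin{align*}
\A_{12}\trans\A_{13}=\sum_{k=1}^p \sign(x_{1k}-x_{2k})\,\sign(x_{1k}-x_{3k}),
\end{align*}
and hence $(\A_{12}\trans\A_{13})^2=\sum_{k,l}\sign(x_{1k}-x_{2k})\sign(x_{1k}-x_{3k})\sign(x_{1l}-x_{2l})\sign(x_{1l}-x_{3l})$, so the entire computation is a double sum of expectations of products of four signs of Gaussian linear combinations.

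For the mean, conditioning on $\bx_1$ renders $\A_{12}$ and $\A_{13}$ conditionally independent, since they are functions of the independent vectors $\bx_2$ and $\bx_3$; thus $\E(\A_{12}\trans\A_{13}\mid\bx_1)=\A_1\trans\A_1$ and, taking expectations, $\E(\A_{12}\trans\A_{13})=\tr(\bSig_2)=p/3$, using $\cov(\A_1)=\bSig_2$ together with the diagonal value $(\bSig_2)_{kk}=\tfrac13$.

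For the second moment I would fix $(k,l)$, set $\rho=\Sigma_{kl}$, and define
\begin{align*}
z_1=\frac{x_{1k}-x_{2k}}{\sqrt2},\quad z_2=\frac{x_{1k}-x_{3k}}{\sqrt2},\quad z_3=\frac{x_{1l}-x_{2l}}{\sqrt2},\quad z_4=\frac{x_{1l}-x_{3l}}{\sqrt2}.
\end{align*}
A direct covariance computation (using $\var(x_{ik})=1$, $\cov(x_{ik},x_{il})=\Sigma_{kl}$ and independence across samples) shows that $(z_1,z_2,z_3,z_4)$ carries exactly the correlation matrix appearing in Lemma~\ref{clalem2}. Since each sign is scale invariant, Lemma~\ref{clalem2} gives
\begin{align*}
\E\Big\{\prod_{j=1}^4\sign(z_j)\Big\}
=\Big(\tfrac{2}{\pi}\arcsin\Sigma_{kl}\Big)^2-\Big(\tfrac{2}{\pi}\arcsin(\Sigma_{kl}/2)\Big)^2+\tfrac19
=(\bSig_1)_{kl}^2-(\bSig_2)_{kl}^2+\tfrac19,
\end{align*}
where the last equality invokes the Gaussian identities \eqref{s1s2s3} for the entries of $\bSig_1$ and $\bSig_2$. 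Summing over $k,l$ and using $\sum_{k,l}(\bSig_i)_{kl}^2=\tr(\bSig_i^2)$ for the symmetric matrices $\bSig_1,\bSig_2$, I obtain $\E[(\A_{12}\trans\A_{13})^2]=\tr(\bSig_1^2)-\tr(\bSig_2^2)+p^2/9$; subtracting $(\E\,\A_{12}\trans\A_{13})^2=p^2/9$ cancels the constant term and yields the claim.

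The routine-but-delicate step, and the one I expect to cause the most friction, is matching the correlation structure of $(z_1,z_2,z_3,z_4)$ to the precise matrix of Lemma~\ref{clalem2}: the coordinate ordering must pair ``same component, different sample'' differences to reproduce the $1/2$ entries and ``different component'' differences to reproduce the $\rho$ and $\rho/2$ entries. The only genuine subtlety beyond bookkeeping is the diagonal case $k=l$, where $\rho=1$ lies on the boundary of the range $(-1,1)$ for which Lemma~\ref{clalem2} is stated; there the product of four signs equals $1$ almost surely, and one checks that the closed-form expression extends continuously to $\rho=1$ with value $1$, so the diagonal terms require no separate treatment in the final summation.
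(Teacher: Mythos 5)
Your proof is correct and follows essentially the same route as the paper's: expand $\A_{12}\trans\A_{13}$ coordinatewise, identify the correlation matrix of the scaled differences with the one in Lemma~\ref{clalem2} at $\rho=\Sigma_{kl}$, convert the resulting expression to $(\bSig_1)_{kl}^2-(\bSig_2)_{kl}^2+\tfrac19$ via \eqref{s1s2s3}, sum over pairs, and subtract the squared mean $\tr^2(\bSig_2)=p^2/9$. Your explicit handling of the diagonal terms $k=l$, where $\rho=1$ lies on the boundary of the range for which Lemma~\ref{clalem2} is stated but the closed form extends continuously with value $1$, is a detail the paper passes over silently and is a welcome extra bit of care.
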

\begin{proof}[Proof of Lemma~\ref{lem2-var}]
For the covariance part,  we have
\begin{align*}
	\A_{ij}=\sign(\bx_i-\bx_j)\indist \sign(\bx),
\end{align*}
and Grothendieck's Identity shows that for any bi-variate normal vector $(z_1~ z_2)\trans$,
\begin{align*}
	\E \left[\sign(z_1) \sign(z_2)\right]=\frac{2}{\pi} \arcsin\left[\corr(z_1,z_2)\right].
\end{align*}
Thus,
\begin{align*}
	\cov(\A_{ij})=\cov \left[ \sign(\bx)\right]=\frac{2}{\pi} \arcsin\left(\bSig\right)=\bSig_1.
\end{align*}
For $\A_i$,
\begin{align*}
	\cov(\A_{ij}, \A_i)=\E (\A_{ij} \A_{i}\trans)=\E (\A_{i} \A_{i}\trans)=\cov(\A_i),
\end{align*}
and 
\begin{align*}
	\cov(\A_i)=\E\left[\sign(\bx_1-\bx_2) \sign(\bx_1-\bx_3)\trans\right]=\E\left[\sign\left(\frac{\bx_1-\bx_2}{\sqrt{2}}\right) \sign\left(\frac{\bx_1-\bx_3}{\sqrt{2}}\right)\trans\right].
\end{align*}
For the  enlarged random vector, we have
\begin{align*}
	\frac{1}{\sqrt{2}}	\begin{pmatrix}
		\bx_1-\bx_2\\
		\bx_1-\bx_3
	\end{pmatrix} \sim N \left(\bf{0}, \begin{pmatrix}
		\bSig & \bSig/2\\	
		\bSig/2 &  \bSig\\
	\end{pmatrix} \right)
\end{align*}
and thus
\begin{align*}
	\cov(\A_{ij}, \A_i)=\cov(\A_i)=\frac{2}{\pi} \arcsin( \bSig/2).
\end{align*}

Next, we derive the explicit result for $\var( \A_{12} \trans \A_{13} )$. Since
\begin{align*}
	&\var( \A_{12} \trans \A_{13} )=\E ( \A_{12} \trans \A_{13} )^2-\tr^2(\bSig_2)\\
	=&~ \E \bigg\{ \sum_{j=1}^p \sign(x_{1j}-x_{2j})  \sign(x_{1j}-x_{3j})  \bigg\}^2-\tr^2(\bSig_2)\\
	=& ~\E  \bigg\{\sum_{i,j =1}^p \sign(x_{1i}-x_{2i})  \sign(x_{1i}-x_{3i}) \sign(x_{1j}-x_{2j})  \sign(x_{1j}-x_{3j})\bigg\}-\frac{1}{9}p^2,
\end{align*} 
and for any $(i,j)$,
\begin{align*}
	\frac{1}{\sqrt{2}}	\begin{pmatrix}
		x_{1i}-x_{2i}\\
		x_{1i}-x_{3i}\\
		x_{1j}-x_{2j}\\
		x_{1j}-x_{3j}
	\end{pmatrix} \sim N \left(\begin{pmatrix}
		0\\
		0\\
		0\\
		0
	\end{pmatrix},   \begin{pmatrix}
		1 & 1/2&\bSig_{ij}&\bSig_{ij}/2\\
		1/2&1&\bSig_{ij}/2&\bSig_{ij}\\
		\bSig_{ij}&\bSig_{ij}/2&1&1/2\\
		\bSig_{ij}/2&\bSig_{ij}&1/2&1
	\end{pmatrix} \right),
\end{align*}
then by Lemma \ref{clalem2}, 
\begin{align*} 
	\var( \A_{12} \trans \A_{13} )=&\sum_{i,j =1}^p \left[\left\{\frac{2}{\pi} \arcsin(\bSig_{ij})\right\}^2-\left\{\frac{2}{\pi} \arcsin(\bSig_{ij}/2)\right\}^2+\frac{1}{9}\right]-\frac{1}{9}p^2\\
	=&\tr(\bSig^2_1)-\tr(\bSig^2_2).
\end{align*}
The proof is completed.  \hfill		
\end{proof}

\begin{lemma}\label{bound}
	Let $\A= 2 \Phi(\bx)-1$ where $\bx \sim N_p(\bf{0},\bSig)$, then for any non-random $p \times p$ matrix $\B$, we have
	\begin{align*}
		\var(\A \trans \B \A) \leq 3  \|\bSig\| \tr\left(\B \bSig_2 \B\trans\right).
	\end{align*}
\end{lemma}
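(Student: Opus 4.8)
The plan is to exploit the fact that in the Gaussian model the vector $\A$ is a smooth coordinatewise function of $\bx$: by the very definition in the statement, $\A = g(\bx)$ with $g(\bx) = (2\Phi(x_1)-1,\ldots,2\Phi(x_p)-1)\trans$, where $\Phi$ is the standard normal distribution function (the marginals are standard normal since $\bSig$ has unit diagonal; this is exactly the representation $\A_i=\E\{\sign(\bx_i-\bx)\mid\bx_i\}$ computed in the proof of Lemma~\ref{lem2-var}). Hence the scalar $h(\bx) := \A\trans\B\A = g(\bx)\trans\B\, g(\bx)$ is a $C^1$ function of a Gaussian vector, which invites the Gaussian Poincar\'e inequality: for $\bx\sim N(\ovec,\bSig)$ one has $\var(h(\bx)) \le \E[(\nabla h)\trans\bSig(\nabla h)]$. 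I would first peel off the covariance by the spectral norm, $(\nabla h)\trans\bSig(\nabla h)\le\|\bSig\|\,\|\nabla h\|^2$, reducing the task to bounding $\E\|\nabla h\|^2$ by $3\tr(\B\bSig_2\B\trans)$.

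Next I would compute the gradient explicitly. Since $g_k$ depends on $\bx$ only through $x_k$, with $g_k'(x_k)=2\phi(x_k)$ where $\phi=\Phi'$, differentiating $h=\sum_{k,l}B_{kl}g_kg_l$ gives $(\nabla h)_m = 2\phi(x_m)\,[(\B+\B\trans)g]_m$. It suffices to argue the symmetric case $\B=\B\trans$, which covers every application in the proof of Theorem~\ref{thm:thlsd} (there $\B$ is a resolvent of $\M_{n,k}$ or a symmetric product thereof); then $(\nabla h)_m = 4\phi(x_m)(\B g)_m$ and $\|\nabla h\|^2 = \sum_m 16\phi(x_m)^2(\B g)_m^2$. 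The decisive observation is the uniform density bound $16\phi(x_m)^2 \le 16/(2\pi)=8/\pi \le 3$, which discards the random diagonal weights and yields $\|\nabla h\|^2 \le 3\|\B g\|^2$ pointwise. Taking expectations and using $\E(g)=\ovec$, so that $\bSig_2=\cov(\A)=\E(\A\A\trans)=\E(g\, g\trans)$, gives $\E\|\B g\|^2 = \E[g\trans\B\trans\B g]=\tr(\B\trans\B\bSig_2)=\tr(\B\bSig_2\B\trans)$ by cyclicity. Combining the three displays delivers $\var(\A\trans\B\A)\le 3\|\bSig\|\tr(\B\bSig_2\B\trans)$.

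The algebra is routine; the main obstacle is setting up the correct functional-analytic frame, namely recognizing the smooth representation $\A=g(\bx)$ and invoking the right form of the Gaussian Poincar\'e inequality for a non-identity covariance $\bSig$ (which follows from the standard isotropic version through the change of variables $\bx=\bSig^{1/2}\bz$, turning $\E\|\nabla_{\bz}\cdot\|^2$ into $\E[(\nabla h)\trans\bSig(\nabla h)]$). A secondary point worth care is that the constant $3$ is precisely the convenient round-up of $8/\pi$ arising from the maximal Gaussian density $\phi(0)=1/\sqrt{2\pi}$, so any cruder bound on $\phi$ would inflate it. Finally, for complex $\B$ (as when $z\in\mC^+$), one replaces $\var$ by $\E|h-\E h|^2$ and runs the identical argument, with $\B\trans$ read as the transpose so that the bound matches the form $\tr\{(\M_{n,k}-z\bI_p)^{-2}\bSig_2\}$ used in the theorem.
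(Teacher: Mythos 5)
Your proposal is correct and follows essentially the same route as the paper's own proof: apply the Gaussian Poincar\'e inequality to $h(\bx)=\A\trans\B\A$ with $\A=2\Phi(\bx)-1$, compute the gradient $(\nabla h)_m=4\phi(x_m)(\B\A)_m$, bound $16\phi^2\leq 8/\pi\leq 3$ and the quadratic form by $\|\bSig\|$, and finish with $\E(\A\A\trans)=\bSig_2$ so that $\E\|\B\A\|^2=\tr(\B\bSig_2\B\trans)$. Your explicit handling of the symmetrization (via $\B+\B\trans$) and of complex $\B$ is slightly more careful than the paper, which writes the expansion only for symmetric $\B$, but the substance is identical.
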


\begin{proof}[Proof of Lemma~\ref{bound}]
	For $\bx=(x_1,\cdots,x_p) \trans$, we define a function
	\begin{align*}
		g(\bx)&=\left(2 \Phi(\bx)-1 \right) \trans \B\left(2 \Phi(\bx)-1 \right) \\
		&=\sum_{i=1}^p b_{ii} \left(2\Phi(x_i)-1\right)^2+2\sum_{i <j} b_{ij} \left(2\Phi(x_i)-1\right) \left(2\Phi(x_j)-1\right).
	\end{align*} 
	Direct calculations can show that
	\begin{align*}
		\nabla g(\bx)=4 \diag\left(f(x_1),\cdots,f(x_p) \right) \B \Phi(\bx),
	\end{align*}	
	where $f(x)=\exp(-x^2/2)/\sqrt{2\pi}$. 
	When $\bx \sim N(\bf{0},\bSig)$, by Gaussian Poincar\'{e} inequality, we have
	\begin{align*}
		\var\left(g(\bx)\right)=&\var(\A \trans \B \A) \leq \E \tr\left\{ \nabla g(\bx)  \trans \bSig  \nabla g(\bx) \right\}\\
		\leq& \frac{16}{2\pi} \|\bSig\| \cdot\E \left\{\tr(\A\trans \B\trans \B \A)\right\}\leq 3  \|\bSig\|\cdot \tr\left(\B \bSig_2 \B\trans\right).
	\end{align*}
	The proof  is completed.  \hfill	
\end{proof}

\section{Proof of Proposition \ref{case2}}
We consider a more general matrix
\begin{align*}
	\bSig=\begin{pmatrix}
		a & b &  \\
		b & a & b\\
		&\ddots&\ddots&\ddots\\
		&&b&a &b\\
		&&0&b&a
	\end{pmatrix}, 
\end{align*}
whose eigenvalues are 
\begin{align*}
	\lambda_k=a+2 b \cos\frac{k \pi}{p+1},~k=1,\ldots,p,
\end{align*}
and  the corresponding eigenvectors are 
\begin{align*}
	\bu_k=\sqrt{\frac{2}{p+1}}\bigg(\sin\frac{k \pi}{p+1},\sin\frac{2 k \pi}{p+1}, \cdots, \sin\frac{p k \pi}{p+1}\bigg) \trans.
\end{align*} 
Then,
\begin{align*}
	\frac{1}{p}\tr (\bSig)^{-1}=\frac{1}{p} \sum_{k=1}^p \frac{1}{a+2 b \cos\frac{k \pi}{p+1}}	 \to \frac{1}{2\pi} \int_0^{2\pi} \frac{1}{a+2 b \cos \theta} d\theta=\frac{1}{\sqrt{a^2-4b^2}},
\end{align*}	
where the limit is due to Szeg\"{o}  theorem \citep{gray2006toeplitz} and the integral can be calculated through the residue theorem from complex analysis. 

For $\bSig=\bSig(\rho)$, we have
\begin{align*}
	\bSig_2^{i,i}=\frac{1}{3},~	\bSig_2^{i,i+1}=\bSig_2^{i-1,i}=\frac{2}{\pi} \arcsin\frac{\rho}{2}
\end{align*}
and 
\begin{align*}
	\bSig_3^{i,i}=\frac{1}{3},~	\bSig_3^{i,i+1}=\bSig_3^{i-1,i}=\frac{2}{\pi} \arcsin {\rho}-\frac{4}{\pi} \arcsin\frac{\rho}{2}.
\end{align*}
Thus,
\begin{align*}
	\left(\bSig_3+2x \bSig_2-z\bI_p \right)^{i,i}=&\frac{1}{3}+\frac{2x}{3}-z,\\
	\left(\bSig_3+2x \bSig_2-z\bI_p \right)^{i,i+1}&=\left(\bSig_3+2x \bSig_2-z\bI_p \right)^{i-,i}=\frac{2}{\pi} \arcsin {\rho}+\frac{4(x-1)}{\pi} \arcsin\frac{\rho}{2}.
\end{align*}
This yields
\begin{align*}
	\lim_{p \to \infty}\frac{1}{p}\tr (\bSig_3+2x \bSig_2-z\bI_p)^{-1}=\frac{1}{\sqrt{\Big(\frac{1}{3}+\frac{2x}{3}-z\Big)^2-4\Big(\frac{2}{\pi} \arcsin {\rho}+\frac{4(x-1)}{\pi} \arcsin\frac{\rho}{2}\Big)^2}}
\end{align*}
and 
\begin{align*}
	&\frac{1}{p}\tr \big[(\bSig_3+2x \bSig_2-z\bI_p)^{-1}\bSig_2\big]\\
	=&\frac{1}{p} \sum_{k=1}^p \frac{\frac{1}{3}+ \frac{4}{\pi} (\arcsin\frac{\rho}{2}) \cos\frac{k \pi}{p+1} }{\frac{1}{3}+\frac{2x}{3}-z+2 \Big(\frac{2}{\pi} \arcsin {\rho}+\frac{4(x-1)}{\pi} \arcsin\frac{\rho}{2}\Big) \cos\frac{k \pi}{p+1}}\\
	=&c(x,\rho)+\left(\frac{1}{3}-c(x,\rho) \Big(\frac{1}{3}+\frac{2x}{3}-z\Big) \right) \\
	&\quad \times\frac{1}{p} \sum_{k=1}^p \frac{1}{\frac{1}{3}+\frac{2x}{3}-z+2 \Big(\frac{2}{\pi} \arcsin {\rho}+\frac{4(x-1)}{\pi} \arcsin\frac{\rho}{2}\Big) \cos\frac{k \pi}{p+1}}\\
	\to & c(x,\rho)+\frac{1-c(x,\rho)(1+2x-3z)}{3} \frac{1}{\sqrt{\Big(\frac{1}{3}+\frac{2x}{3}-z\Big)^2-4\Big(\frac{2}{\pi} \arcsin {\rho}+\frac{4(x-1)}{\pi} \arcsin\frac{\rho}{2}\Big)^2}}.
\end{align*}
The proof is completed.  \hfill

\bibliographystyle{imsart-nameyear}
\bibliography{ref}

\end{document}